\newtheorem{theorem}{Theorem}[section]
\newtheorem{prop}[theorem]{Proposition}
\newtheorem{defn}[theorem]{Definition}
\newtheorem{lemma}[theorem]{Lemma}
\newtheorem{coro}[theorem]{Corollary}
\newtheorem{thm-def}[theorem]{Theorem-Definition}
\newtheorem{def-prop}[theorem]{Definition-Proposition}
\newtheorem{prop-def}[theorem]{Proposition-Definition}
\newtheorem{coro-def}[theorem]{Corollary-Definition}
\newtheorem{claim}{Claim}[section]
\newtheorem{remark}[theorem]{Remark}
\newtheorem{exam}{Example}[section]
\newtheorem{notation}[theorem]{Notation}
\newcommand{\nc}{\newcommand}
\nc{\tred}[1]{\textcolor{red}{#1}}
\nc{\tblue}[1]{\textcolor{blue}{#1}}
\nc{\tgreen}[1]{\textcolor{green}{#1}}
\nc{\tpurple}[1]{\textcolor{purple}{#1}}
\nc{\btred}[1]{\textcolor{red}{\bf #1}}
\nc{\btblue}[1]{\textcolor{blue}{\bf #1}}
\nc{\btgreen}[1]{\textcolor{green}{\bf #1}}
\nc{\btpurple}[1]{\textcolor{purple}{\bf #1}}
\renewcommand{\Bbb}{\mathbb}
\renewcommand{\frak}{\mathfrak}
\newcommand{\efootnote}[1]{}
\renewcommand{\textbf}[1]{}
\newcommand{\delete}[1]{}
\nc{\dfootnote}[1]{{}}          %{{}}
\nc{\ffootnote}[1]{\dfootnote{#1}}
\nc{\mfootnote}[1]{\footnote{#1}} % Use this to show footnotes
\nc{\ofootnote}[1]{\footnote{\tiny Older version: #1}} % Use this to show footnotes
\nc{\mlabel}[1]{\label{#1}  % Use the next two lines to show names
{\hfill \hspace{1cm}{\bf{{\ }\hfill(#1)}}}}
\nc{\mcite}[1]{\cite{#1}{{\bf{{\ }(#1)}}}}  % Use this lines to show names
\nc{\mref}[1]{\ref{#1}{{\bf{{\ }(#1)}}}}  % Use this lines to show names
\nc{\mbibitem}[1]{\bibitem[\bf #1]{#1}} % Use this to show name
\nc{\mlabel}[1]{\label{#1}}  % Use this to suppress names
\nc{\mcite}[1]{\cite{#1}}  % Use this to suppress names
\nc{\mref}[1]{\ref{#1}}  % Use this to suppress names
\nc{\mbibitem}[1]{\bibitem{#1}} % Use this to show number name
\nc{\sbar}{\, {\scriptstyle{|\hspace{-.08cm}|\hspace{-.08cm}|
\hspace{-.08cm}|\hspace{-.08cm}|\hspace{-.08cm}|
\hspace{-.08cm}|\hspace{-.08cm}|\hspace{-.08cm}|\hspace{-.08cm}|
\hspace{-.08cm}|\hspace{-.08cm}|\hspace{-.08cm}|
\hspace{-.08cm}|\hspace{-.08cm}|\hspace{-.08cm}|\hspace{-.08cm}|
\hspace{-.08cm}|\hspace{-.08cm}|\hspace{-.08cm}|
\hspace{-.08cm}|\hspace{-.08cm}|\hspace{-.08cm}|\hspace{-.08cm}|
\hspace{-.08cm}|\hspace{-.08cm}|\hspace{-.08cm}|
\hspace{-.08cm}|\hspace{-.08cm}|}\, }}
\nc{\nonu}{{}^{\,,  0}}
\nc{\qnonu}{^{0}}
\nc{\mzvalg}{\mathbf{MZV}} \nc{\edsalg}{\mathbf{EDS}}
\nc{\qsh}{{^{\ast}}}
\nc{\lzero}{_{\hskip -5pt 0}}
\nc{\shf}{{^{\hskip -2pt \ssha}}}
\nc{\shzero}{_{\hskip -7.5pt 0}}
\nc{\lone}{_{\hskip -7.5pt 1}}
\nc{\shqs}{\eta}
\nc{\qssha}{{{\ssha\hspace{-2pt}_\ast}}\,}
\nc{\qsshab}{{{\ssha\hspace{-2pt}_\ast}}\,}
\nc{\un}{u}                 %unit map in bialgebra
\nc{\mult}{m}       %multiplication in bialgebra
\nc{\cprod}{\star}
\nc{\sh}{\mrm{MS}}
\nc{\vr}{{\vec r}} \nc{\vs}{{\vec s}}
\nc{\shprl}{{{\shpr}_\lambda}}
\nc{\wvec}[2]{{\scriptsize{\big [ \!\!
    \begin{array}{c} #1 \\ #2 \end{array} \!\! \big ]}}}
\nc{\lp}{\big ( } \nc{\rp}{\big ) } \nc{\lb}{\!\left \langle }
\nc{\rb}{\right \rangle\! }
\nc{\bin}[2]{ (_{\stackrel{\scs{#1}}{\scs{#2}}})}  %binomial coeff
\nc{\binc}[2]{ \big (\!\! \begin{array}{c} \scs{#1}\\
    \scs{#2} \end{array}\!\! \big )}  %binomial coeff
\nc{\bincc}[2]{  \left ( {\scs{#1} \atop
    \vspace{-1cm}\scs{#2}} \right )}  %binomial coeff
\nc{\lrb}[1]{[#1]}
\nc{\bs}{\bar{S}} \nc{\cosum}{\sqsubset} \nc{\convs}{\frakS_c}
\nc{\conv}{c} \nc{\la}{\longrightarrow} \nc{\rar}{\rightarrow}
\nc{\dar}{\downarrow} \nc{\dap}[1]{\downarrow
\rlap{$\scriptstyle{#1}$}} \nc{\uap}[1]{\uparrow
\rlap{$\scriptstyle{#1}$}} \nc{\defeq}{\stackrel{\rm def}{=}}
\nc{\disp}[1]{\displaystyle{#1}} \nc{\dotcup}{\
\displaystyle{\bigcup^\bullet}\ } \nc{\gzeta}{\bar{\zeta}}
\nc{\hcm}{\ \hat{,}\ } \nc{\hts}{\hat{\otimes}}
\nc{\barot}{{\otimes}} \nc{\free}[1]{\bar{#1}}
\nc{\uni}[1]{\tilde{#1}} \nc{\hcirc}{\hat{\circ}} \nc{\lleft}{[}
\nc{\lright}{]} \nc{\curlyl}{\left \{ \begin{array}{c} {} \\ {}
\end{array}
    \right .  \!\!\!\!\!\!\!}
\nc{\curlyr}{ \!\!\!\!\!\!\!
    \left . \begin{array}{c} {} \\ {} \end{array}
    \right \} }
\nc{\longmid}{\left | \begin{array}{c} {} \\ {} \end{array}
    \right . \!\!\!\!\!\!\!}
\nc{\nvec}[1]{[\vec{#1}]} \nc{\ora}[1]{\stackrel{#1}{\rar}}
\nc{\ola}[1]{\stackrel{#1}{\la}}%${\Bbb Z}$
\nc{\ot}{\otimes}
\nc{\bigot}{\bigotimes} \nc{\mot}{{{\sbar}}} \nc{\otm}{\bar{\sbar}}
\nc{\scs}[1]{\scriptstyle{#1}} \nc{\subv}{{^{\star}}}
\nc{\cov}{{^{\sharp}}} \nc{\mrm}[1]{{\rm #1}}
\nc{\dirlim}{\displaystyle{\lim_{\longrightarrow}}\,}
\nc{\invlim}{\displaystyle{\lim_{\longleftarrow}}\,}
\nc{\proofbegin}{\noindent{\bf Proof: }}
\nc{\proofend}{$\blacksquare$ \vspace{0.3cm}}
\nc{\sha}{{\mbox{\cyr X}}}  %used to be \cyr
\newfont{\scyr}{wncyr10 scaled 550}
\nc{\ssha}{{\,\mbox{\bf \scyr X}\,}}
\nc{\shap}{{\mbox{\cyrs X}}} %sha as product
\nc{\shpr}{\diamond}    %Shuffle product
\nc{\shplus}{\shpr^+}
\nc{\shprc}{\shpr_c}    %Cartier's product
\nc{\msh}{\ast} \nc{\vep}{\varepsilon} \nc{\labs}{\mid\!}
\nc{\rabs}{\!\mid}
\nc{\FG}{\mrm{FG}}
\nc{\fp}{\tilde{P}} \nc{\rchar}{\mrm{char}} \nc{\Fil}{\mrm{Fil}}
\nc{\gmzvs}{gMZV\xspace}
\nc{\gmzv}{gMZV\xspace}
\nc{\mzv}{MZV\xspace}
\nc{\mzvs}{MZVs\xspace}
\nc{\MZV}{\mrm{MZV}}
\nc{\Hom}{\mrm{Hom}} \nc{\id}{\mrm{id}} \nc{\im}{\mrm{im}}
\nc{\incl}{\mrm{incl}} \nc{\map}{\mrm{Map}} \nc{\mchar}{\rm char}
\nc{\nz}{\rm NZ} \nc{\supp}{\mathrm Supp}
\nc{\Alg}{\mathbf{Alg}}
\nc{\Bax}{\mathbf{Bax}}
\nc{\bff}{\mathbf f}
\nc{\bfk}{{\bf k}}
\nc{\bfone}{{\bf 1}}
\nc{\bfx}{\mathbf x}
\nc{\bfy}{\mathbf y}
\nc{\base}[1]{\bfone^{\otimes ({#1}+1)}} %{{a_{#1}}}
\nc{\Cat}{\mathbf{Cat}}
\nc{\detail}{\marginpar{\bf More detail}
    \noindent{\bf Need more detail!}
    \smallskip}
\nc{\Int}{\mathbf{Int}}
\nc{\Mon}{\mathbf{Mon}}
\nc{\remarks}{\noindent{\bf Remarks: }}
\nc{\Rings}{\mathbf{Rings}}
\nc{\Sets}{\mathbf{Sets}}
\nc{\BA}{{\Bbb A}} \nc{\CC}{{\Bbb C}} \nc{\DD}{{\Bbb D}}
\nc{\EE}{{\Bbb E}} \nc{\FF}{{\Bbb F}} \nc{\GG}{{\Bbb G}}
\nc{\HH}{{\Bbb H}} \nc{\LL}{{\Bbb L}} \nc{\NN}{{\Bbb N}}
\nc{\KK}{{\Bbb K}} \nc{\QQ}{{\Bbb Q}} \nc{\RR}{{\Bbb R}}
\nc{\TT}{{\Bbb T}} \nc{\VV}{{\Bbb V}} \nc{\ZZ}{{\Bbb Z}}
\nc{\cala}{{\mathcal A}} \nc{\calc}{{\mathcal C}}
\nc{\cald}{{\mathcal D}} \nc{\cale}{{\mathcal E}}
\nc{\calf}{{\mathcal F}} \nc{\calg}{{\mathcal G}}
\nc{\calh}{{\mathcal H}} \nc{\cali}{{\mathcal I}}
\nc{\call}{{\mathcal L}} \nc{\calm}{{\mathcal M}}
\nc{\caln}{{\mathcal N}} \nc{\calo}{{\mathcal O}}
\nc{\calp}{{\mathcal P}} \nc{\calr}{{\mathcal R}}
\nc{\cals}{{\mathcal S}}
\nc{\calt}{{\mathcal T}} \nc{\calw}{{\mathcal W}}
\nc{\calk}{{\mathcal K}} \nc{\calx}{{\mathcal X}}
\nc{\CA}{\mathcal{A}}
\nc{\fraka}{{\frak a}}
\nc{\frakA}{{\frak A}}
\nc{\frakb}{{\frak b}}
\nc{\frakB}{{\frak B}}
\nc{\frakH}{{\frak H}}
\nc{\frakL}{{\mathfrak L}}
\nc{\frakM}{{\frak M}}
\nc{\bfrakM}{\overline{\frakM}}
\nc{\frakm}{{\frak m}}
\nc{\frakP}{{\frak P}}
\nc{\frakN}{{\mathfrak N}}
\nc{\frakp}{{\frak p}}
\nc{\frakR}{{\frak R}}
\nc{\frakS}{{\frak S}}
\nc{\frakx}{{\mathfrak x}}
\font\cyr=wncyr10
\font\cyrs=wncyr7
\nc {\p}{\partial }
\nc {\lne}{{\ln (-\varepsilon)}}
\nc{\dl}{{\delta}}
\nc{\redt}[1]{\textcolor{red}{#1}}
\nc{\li}[1]{\textcolor{red}{Li:#1}}
\nc{\zb}[1]{\textcolor{blue}{Bin: #1}}
\nc{\zhb}[1]{\textcolor{red}{Bin: #1}}
\begin{document}

\title[Polylogarithms, MZVs and Rota-Baxter algebras]{Polylogarithms and multiple zeta values from free Rota-Baxter algebras}
%
%========================================================================================%
\author{Li Guo}
\address{Department of Mathematics and Computer Science,
         Rutgers University,
         Newark, NJ 07102}
\email{liguo@rutgers.edu}
\author{Bin Zhang}
\address{Yangtze Center of Mathematics,
Sichuan University, Chengdu, 610064, P. R. China }
\email{zhangbin@scu.edu.cn}

%=================================================================================
\date{May 7, 2010}
%==================================================================================

\begin{abstract}
We show that the shuffle algebras for polylogarithms and regularized MZVs in the sense of Ihara, Kaneko and Zagier are both free commutative nonunitary Rota-Baxter algebras with one generator. We apply these results to show that the full sets of shuffle relations of polylogarithms and regularized MZVs are derived by a single series. We also take this approach to study the extended double shuffle relations of MZVs by comparing these shuffle relations with the quasi-shuffle relations of the regularized MZVs in our previous approach of MZVs by renormalization.
\end{abstract}

\maketitle

%==================================================================================

%% Classification: 11M41 (MZVs), 16W30 Hopf algebra, 81T15 QFT renormalization

\tableofcontents

\setcounter{section}{0}

%================================================================================

\section {Introduction}
In this paper we show that there is a free commutative nonunitary
Rota-Baxter algebra with one generator behind the shuffle relations
of polylogarithms and regularized multiple zeta values. Thus by the
universal property of a free Rota-Baxter algebra, with a suitable
choice of the image for this generator, one can recover all the
shuffle relations of polylogarithms and regularized multiple zeta
values. We also apply this approach to derive the extended double
shuffle relation in the work of Ihara, Kaneko and Zagier.

{\bf Multiple zeta values (MZVs)} are defined to be the evaluation of the multiple complex variable function
\begin{equation}
\zeta(\vec{s})=\zeta(s_1,\cdots, s_k)=\sum_{n_1>\cdots>n_k\geq 1}
    \frac{1}{n_1^{s_1}\cdots n_k^{s_k}}
\mlabel{eq:mzv}
\end{equation}
at positive integers $s_1,\cdots,s_k$ with $s_1>1$.
They were introduced in the early 1990s with motivation from number theory, combinatorics and quantum field theory~\mcite{Ho0,BK,Za}.
Since then
the subject has been studied intensively with interactions with a broad range of areas in mathematics and physics, including arithmetic geometry, combinatorics, number theory, knot theory, Hopf algebra, quantum field theory and mirror symmetry~\mcite{An,3BL,Ca1,GM,IKZ,Ra,Te}.
Many mathematicians and physicists have contributed to this area, including Broadhurst, Cartier, Deligne, Goncharov, Hoffman, Kontsevich, Kreimer, Manin and Zagier.

MZVs can be regarded as basic blocks of important invariants in mathematics and physics.
In mathematics, periods of all mixed Tate motives are conjecturally rational linear combinations of MZVs. In physics, most computed values of Feynman integrals in quantum field theory are also linear combinations of MZVs.
A basic conjecture on MZVs is the Dimension Conjecture of Zagier which implies that $\zeta(2), \zeta(n)$, for $n\geq 2$ odd, are algebraically independent over $\mathbb{Q}$. Further number theoretic significance of MZVs comes from their connection with mixed Tate motives, motivic Galois groups, modular forms and Drinfeld associators.

A special role played by special values of various one variable zeta
functions and $L$-functions in number theory is to make the
connection between the analytic theory and the algebraic theory. In
a similar spirit, MZVs also have the remarkable property that these
purely analytic defined values have a purely algebraically defined
structure determined by the double shuffle product. More precisely,
there are many linear relations and in general algebraic relations. For example,
$$ \zeta(3)=\zeta(2,1), \quad \zeta(4)=4\zeta(3,1)$$
which are already known to Euler. Most of the found relations among
MZVs come from the extended double shuffle relation as a combination
of the shuffle and stuffle (quasi-shuffle) relations of MZVs. In fact, it is conjectured that all algebraic relations of MZVs can be obtained
this way~\mcite{Ra,IKZ}. Therefore the extended double shuffle relations give a well-formulated algebraic
framework to study the analytically defined MZVs.

Thus it is important to understand the extended double shuffle relations. The purpose of this paper is to give more structures on these relations from the point of view of free Rota-Baxter algebras~\mcite{Ba,GK1,Ro}. We also consider the shuffle relation of multiple polylogarithms~\mcite{3BL,Go}.

Recently, there have been a lot of interest to study MZVs
$\zeta(s_1,\cdots,s_k)$ beyond the region $s_1>1, s_i\geq 1, 1\leq
i\leq k$ where they converge. Since analytic continuation fails to
define most of these values, other approaches have been adopted,
such as taking directional limits and renormalization. The
approach of renormalization was introduced from the renormalization
of quantum field theory in the algebraic framework of Connes and
Kreimer~\mcite{CK,CK1}. It works quite well to define MZVs at
negative arguments that extend the quasi-shuffle relations of the convergent MZVs~\mcite{GZ,GZ2,MP2,Zh2}. This paper
comes from our effort in extending the shuffle relation.

After reviewing the background on free Rota-Baxter algebras and their construction by mixable shuffle products, we establish in Section~\mref{sec:sh1} the freeness of the nonunitary shuffle algebra $\calh\shf\lone\nonu$ for MZVs. This property is extended to a larger free Rota-Baxter algebra $\calh_{\geq 0}\qnonu$ in Section~\mref{sec:sh0}. The applications are given in Section~\mref{sec:app}. We first apply the Rota-Baxter algebraic freeness of $\calh_{\geq 0}\qnonu$ to recover the shuffle relation of multiple polylogarithms~\mcite{3BL,Go}. We then apply the Rota-Baxter algebraic freeness of $\calh\shf\lone\nonu$ to recover the extended shuffle relation of regularized MZVs in the sense of Ihara, Kaneko and Zagier~\mcite{IKZ}. We finally show that this freeness property of $\calh\shf\lone\nonu$, together with the regularized MZVs in our renormalization approach~\mcite{GZ,GZ2}, provides another way to obtain the map $\rho$ of~\mcite{IKZ} that serves as the key link for the extended double shuffle relation.

\smallskip

\noindent {\bf Acknowledgements: } L. Guo acknowledges the support
from NSF grant DMS-0505643 and thanks  the Center of Mathematics at
Zhejiang University for its hospitality. B. Zhang acknowledges the
support from NSFC grant 10631050 and 10911120391/A0109.

\section{Free Rota-Baxter algebra structure on the shuffle algebra $\calh\shf\lone\nonu$}
\mlabel{sec:sh1}
We start with a summary of the related background on Rota-Baxter algebra. We then prove the one generator freeness of the shuffle algebra $\calh\shf\lone\nonu$ that has arisen from the study of MZVs.

\subsection{Rota-Baxter algebras and shuffle products}
To provide the necessary
motivation and background for our study, we briefly review Rota-Baxter algebras and their free objects in the commutative case. For further
details, see the survey papers~\mcite{Gugn,Gwi,GPXZ} and the references therein.

All rings and algebras in this paper are assumed to be unitary unless otherwise specified. Let $\bfk$ be a commutative ring whose identity is denoted by $1$.
\subsubsection{Rota-Baxter algebras}
Rota-Baxter algebra is an abstraction of the algebra of
continuous functions acted by the integral operator. It
originated from the probability study of Glenn Baxter~\mcite{Ba} in 1960
and was developed further by Cartier and the school of Rota
in the 1960s and 1970s~\mcite{Ca,Ro}. Independently, this structure
appeared in the Lie algebra context as the operator form of
the classical Yang-Baxter equation in the 1980s~\mcite{STS,Bai}.
Since the late 1990s, Rota-Baxter algebra has found important
theoretical developments and applications in mathematical physics, operads, number theory and combinatorics~\mcite{Ag,BGN,CK,EG1,EGK,EGM,GK1,GZ}.

\begin{defn}
{\rm
Let $\lambda\in \bfk$ be fixed. A unitary (resp. nonunitary) {\bf Rota--Baxter
$\bfk$-algebra {(RBA)} of weight $\lambda$} is a pair $(R,P)$
consisting of unitary (resp. nonunitary) $\bfk$-algebra $R$ and a $\bfk$-linear map
$P: R \to R$ such that
\begin{equation}
 P(x)P(y) = P(xP(y))+P(P(x)y)+ \lambda P(xy),\ \forall x,\ y\in R.
\mlabel{eq:Ba}
\end{equation}
Then $P$ is called a {\bf Rota-Baxter operator}.
}
\mlabel{de:rba}
\end{defn}

A Rota-Baxter algebra homomorphism $f:(R,P)\to (R',P')$ between Rota-Baxter $\bfk$-algebras $(R,P)$ and $(R',P')$ is a $\bfk$-algebra homomorphism $f:R\to R'$ such that $f\circ P = P'\circ f$.

The following Rota-Baxter operators have played important roles in the study of MZVs.
\begin{exam}{\rm
{\bf (The integration operator)}
Let $R$ be the $\RR$-algebra $C[0,\infty)$ of continuous functions $f(x)$ on $[0,\infty)$.
Then the integration operator
\begin{equation}
 P:R\to R, \quad P(f)(x)=\int_0^x f(t)dt
 \mlabel{eq:int}
 \end{equation}
is a Rota-Baxter operator of weight $0$ by the integration by parts formula~\mcite {Ba}. A variation of this operator is the operator $J$ in Eq.~(\mref{eq:logint}).
} \mlabel{ex:int}
\end{exam}
\begin{exam} {\rm
{\bf (The summation operator)}
Consider the summation operator~\mcite{Zud}
$$P(f)(x):= \sum_{n\geq 1} f(x+n)$$
on functions with suitable convergency conditions, such as $f(x) = O(x^{-2})$. It is a Rota-Baxter operator of weight 1.
}
\mlabel{ex:sum}
\mlabel{ex:ps}
\end{exam}

\begin{exam} {\rm
{\bf (The pole part projector)}
Let $A=\bfk[\vep^{-1},\vep]]$ be the algebra of Laurent series. Define $\Pi:A\to A$ by
\begin{equation}
\Pi\big(\sum_{n} a_n \vep^n \big)=\sum_{n<0} a_n \vep^n.
\mlabel{eq:lau}
\end{equation}
Then $\Pi$ is a Rota-Baxter operator of weight $-1$. This operator arises in the renormalization of quantum field theory and multiple zeta values~\mcite{CK,GZ,MP2}.
}
\mlabel{ex:lau}
\end{exam}

\subsubsection{Shuffle products and free Rota-Baxter algebras} \mlabel{ss:msh}
We briefly recall the construction of shuffle and quasi-shuffle products in the framework of mixable shuffle algebras~\mcite{GK1,GK2}.

Let $\bfk$ be a commutative ring.
Let $A$ be a commutative $\bfk$-algebra {\em that is not necessarily unitary}. For a given $\lambda\in \bfk$, the {\bf mixable shuffle algebra of weight $\lambda$ generated by $A$} (with coefficients in $\bfk$) is $\sh(A)=\sh_{\bfk,\lambda}(A)$ whose underlying $\bfk$-module is that of the tensor algebra
\begin{equation}
T(A)= \bigoplus_{k\ge 0}
    A^{\otimes k}
= \bfk \oplus A\oplus A^{\otimes 2}\oplus \cdots
\mlabel{eq:mshde}
\end{equation}
equipped with the {\bf
mixable shuffle product $\shprl$ of weight $\lambda$} defined as
follows.

For pure tensors
$\fraka=a_1\ot \ldots \ot a_k\in A^{\ot k}$ and $\frakb=b_1\ot
\ldots \ot b_\ell\in A^{\ot \ell}$, a {\bf shuffle} of $\fraka$ and
$\frakb$ is a tensor list of $a_i$ and $b_j$ without change the
natural orders of the $a_i$s and the $b_j$s. The {\bf shuffle product} $\fraka \ssha \frakb$ is the sum of all shuffles of $\fraka$ and $\frakb$. The product can also be defined recursively by
$$
\fraka \ssha \frakb:= a_1\ot \big((a_2\ot \cdots\ot a_k)\ssha \frakb\big) + b_1\ot \big(\fraka \ssha (b_2\ot \cdots \ot b_\ell)\big)
$$
with the convention that if $k=1$ (resp. $\ell=1$) then $a_2\ot \cdots\ot a_k$ (resp. $b_2\ot \cdots \ot b_\ell$) is the identity.

More generally, for a
fixed $\lambda\in \bfk$, a {\bf mixable shuffle} (of weight
$\lambda$) of $\fraka$ and $\frakb$ is a shuffle of $\fraka$ and
$\frakb$ in which some (or {\it none}) of the pairs $a_i\ot b_j$ are
merged into $\lambda\, a_i b_j$. Then the {\bf mixable shuffle product (of weight $\lambda$)} $\fraka \shprl \frakb$ is defined to be the sum of mixable shuffles of $\fraka$ and $\frakb$. When $\lambda=0$, we simply have the shuffle product which is also defined when $A$ is only a $\bfk$-module.

The product $\shprl$ can also be defined by the following
recursion~\mcite{EGsh,GZ2,Ho2,MP2}.
\begin{equation}
  \fraka \shprl \frakb =
  a_1\ot  \big ((a_2\ot \cdots\ot a_k)\shprl
\frakb \big ) + b_1\ot  \big (\fraka \shprl (b_2 \ot \cdots \ot  b_\ell)\big)  + \lambda(a_1 b_1)  \big ( (a_2\ot \cdots\ot a_k) \shprl
     (b_2\ot \cdots\ot  b_\ell)\big )
\mlabel{eq:quasi}
\end{equation}
with the convention that if $k=1$ (resp. $\ell=1$) then $a_2\ot \cdots\ot a_k$ (resp. $b_2 \ot \cdots \ot  b_\ell$) is the identity. Further, if $k=\ell=1$ then take $\lambda(a_1 b_1)  \big ( (a_2\ot \cdots\ot a_k) \shprl
     (b_2\ot \cdots\ot  b_\ell)\big )=\lambda (a_1 b_1)$.

We have the following relationship between mixable shuffle product and free commutative Rota-Baxter algebras.

\begin{theorem} {\rm (}\mcite{GK1}{\rm )} The tensor product algebra $\sha(A):=\sha_{\bfk,\lambda}(A)= A\ot \sh_{\bfk,\lambda}(A)$, with the linear operator $P_A:\sha(A)\to \sha(A)$ sending $\fraka$ to $1\ot \fraka$, is the free commutative Rota-Baxter algebra of weight $\lambda$ generated by A.
\mlabel{thm:freea}
\end{theorem}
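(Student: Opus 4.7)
The plan has two parts: first show that $(\sha(A), P_A)$ is itself a commutative Rota-Baxter $\bfk$-algebra of weight $\lambda$, and then verify the universal mapping property with respect to commutative Rota-Baxter algebras of weight $\lambda$.

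For the first part, $\sha(A) = A \ot \sh_{\bfk,\lambda}(A)$ carries a product built from the multiplication on $A$ and the mixable shuffle product $\shprl$ on $\sh(A) = T(A)$. Commutativity follows from commutativity of $A$ together with the symmetry of $\shprl$, which is a direct induction on total tensor degree from the recursion (\mref{eq:quasi}). Associativity of $\shprl$, and hence of the full product, is a similar but more elaborate induction from (\mref{eq:quasi}). The Rota-Baxter identity (\mref{eq:Ba}) for $P_A$ is essentially the three-term recursion (\mref{eq:quasi}) in disguise: after the leading factors created by $P_A$ are peeled off, the three summands on the right of (\mref{eq:quasi}) correspond to the three terms $P_A(xP_A(y))$, $P_A(P_A(x)y)$, and $\lambda P_A(xy)$ of (\mref{eq:Ba}).

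For the universal property, given a commutative Rota-Baxter $\bfk$-algebra $(R,P)$ of weight $\lambda$ and a $\bfk$-algebra homomorphism $f: A \to R$, I define $\tilde f : \sha(A) \to R$ on pure tensors by
\begin{equation*}
\tilde f(a_0 \ot a_1 \ot \cdots \ot a_n) = f(a_0)\, P\bigl(f(a_1)\, P\bigl(f(a_2)\, \cdots\, P(f(a_n)) \cdots \bigr)\bigr),
\end{equation*}
extended $\bfk$-linearly. Then $\tilde f$ restricts to $f$ on the $A^{\ot 1}$ component of $\sha(A)$ and satisfies $\tilde f \circ P_A = P \circ \tilde f$ by construction. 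Uniqueness is automatic, since the subalgebra of $\sha(A)$ generated by $A$ and closed under $P_A$ is all of $\sha(A)$, and the displayed formula is forced on such generators.

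The crux is proving $\tilde f$ is multiplicative, i.e.\ $\tilde f(\fraka \cdot \frakb) = \tilde f(\fraka)\tilde f(\frakb)$, which I would establish by induction on the total tensor degree $\deg(\fraka) + \deg(\frakb)$. After separating off the leading $A$-factor of each tensor, the inductive step reduces to matching the three-term shuffle recursion (\mref{eq:quasi}) applied to the tails against the three-term Rota-Baxter identity (\mref{eq:Ba}) applied to $P(\tilde f(\text{tail of }\fraka)) \cdot P(\tilde f(\text{tail of }\frakb))$ in $R$; the $\lambda$-weighted merging term in (\mref{eq:quasi}) is exactly what the weight $\lambda$ term of (\mref{eq:Ba}) produces on the target side. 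This bookkeeping --- carefully tracking the leading $A$-factors and aligning the two three-term expansions --- is the principal technical obstacle, but once executed it delivers the desired universal property.
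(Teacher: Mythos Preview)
Your proposal is correct and is essentially the standard proof; however, the paper itself does not prove this theorem at all---it is quoted from~\mcite{GK1} as background, so there is no in-paper proof to compare against. The argument you sketch (verifying associativity and commutativity of $\shprl$ by induction on tensor length via the recursion~(\mref{eq:quasi}), reading off the Rota--Baxter identity for $P_A$ from that same recursion, and then defining $\tilde f$ by the nested $P$-formula and proving multiplicativity by matching~(\mref{eq:quasi}) against~(\mref{eq:Ba})) is exactly the approach taken in the original reference~\mcite{GK1}.
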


Now let $A$ be a commutative nonunitary $\bfk$-algebra and let $\uni{A}=\bfk\oplus A$ be the unitarization of $A$. Define
\begin{equation}
 \sha_\bfk(A)^0=\bigoplus_{n\geq 0} (\uni{A}^{\ot n}\ot A)
\mlabel{eq:freenrb}
\end{equation}
with the convention that $\uni{A}^{\ot 0}=\bfk$ and thus $\uni{A}^{\ot 0}\ot A=A$.
Then $\sha_\bfk(A)^0$ is the $\bfk$-submodule of $\sha_\bfk(\uni{A})$,
additively spanned by tensors of the form
\[ a_0 \otimes\ldots \otimes a_n,\quad
a_i\in \uni{A}, 0\leq i\leq n-1,\ a_n\in A.\]
Then $\sha_\bfk(A)^0$, with the restriction of $P_{\uni{A}}$,
denoted by $P_A$, is a subobject of $\sha_\bfk(\uni{A})$ in the category of commutative non-unitary Rota-Baxter algebras.
By Proposition 2.6 of~\mcite{GK2}, $(\sha_\bfk(A)^0,P_A)$ is the free commutative non-unitary Rota-Baxter algebra generated by $A$. In the rest of the paper, we will be most interested in the special case when $A=x\bfk[x]$ and thus $\uni{A}=\bfk[x]$ and when the weight $\lambda$ is 0.
We make the statement precise for the convenience of later references.

\begin{theorem} $($\cite[Proposition 2.6]{GK2}$)$ Denote
$$\sha(x\bfk[x])^0=\bigoplus_{u_i\geq 0, 1\leq i\leq k, u_k\ge1, k\geq 1} \bfk x^{u_1}\ot \cdots \ot x^{u_k}.$$
Then with the restriction of the product and the Rota-Baxter operator $P_x$ in $\sha(\bfk[x])=\sha_{\bfk,0}(\bfk[x])$, $\sha(x\bfk[x])^0$ is the free commutative nonunitary Rota-Baxter algebra of weight $0$ generated by $x$. More precisely, for any commutative nonunitary Rota-Baxter algebra of weight 0 $(R,P)$ and a given element $r\in R$, there is a unique homomorphism of non-unitary Rota-Baxter algebras of weight 0 $f:\sha(x\bfk[x])^0 \to R$ such that $f(x)=r$.
\mlabel{thm:free}
\end{theorem}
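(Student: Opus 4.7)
The plan is to obtain this theorem as the special case $A = x\bfk[x]$ of the general freeness statement that $\sha_\bfk(A)^0$ is the free commutative nonunitary Rota-Baxter algebra of weight $\lambda$ on the algebra $A$ (Proposition~2.6 of \mcite{GK2}, already cited in the statement), and then to repackage its universal property so that the generator is a single element of a Rota-Baxter algebra rather than an algebra homomorphism out of $A$.

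First I would identify the two presentations of the underlying module. With $A = x\bfk[x]$ one has the unitarization $\uni{A} = \bfk \oplus x\bfk[x] = \bfk[x]$, which has $\bfk$-basis $\{x^u \mid u \geq 0\}$, while $A$ itself has $\bfk$-basis $\{x^u \mid u \geq 1\}$. Expanding the tensor factors of $\bigoplus_{n \geq 0}(\uni{A}^{\ot n} \ot A)$ from Eq.~(\mref{eq:freenrb}) on these bases reproduces exactly the direct sum in the statement, with $k = n+1$ and the constraint $u_k \geq 1$ recording that the final factor lies in $A$ rather than in $\uni{A}$. I would then verify that this submodule of $\sha(\bfk[x])$ is stable under both operations: $P_x$ prepends $1$ and so preserves the tail exponent $u_k \geq 1$, while a weight-$0$ shuffle of two tensors ends in either $x^{u_k}$ or $x^{v_\ell}$ from the two inputs, each of positive degree.

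Second I would translate the universal property. The key observation is that $x\bfk[x]$ is the free commutative nonunitary $\bfk$-algebra on the one generator $x$: giving a $\bfk$-algebra homomorphism $\phi: x\bfk[x] \to R$ into any commutative nonunitary $\bfk$-algebra $R$ is the same data as giving an element $r = \phi(x) \in R$, with $\phi(x^n) = r^n$ for $n \geq 1$. Composing this bijection with the Rota-Baxter universal property of Proposition~2.6 of \mcite{GK2}, the unique nonunitary Rota-Baxter homomorphism $f: \sha(x\bfk[x])^0 \to R$ extending $\phi$ is the unique Rota-Baxter homomorphism with $f(x) = r$. For uniqueness directly: any such $f$ restricts on $A = \uni{A}^{\ot 0} \ot A$ to an algebra homomorphism sending $x \mapsto r$, hence equals $\phi$ there, and is then determined on all of $\sha(x\bfk[x])^0$ by the cited freeness of $\sha_\bfk(A)^0$ on $A$.

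No step is a serious obstacle: the substantive content is the general nonunitary freeness result of \mcite{GK2}, and the present theorem is its specialization together with the change of universal property from ``free on an algebra'' to ``free on a single element''. The one point requiring some care is the nonunitarity of $A = x\bfk[x]$: working instead with the unitarization $\bfk[x]$ would not distinguish the generator $x$ from the unit $1$ in the algebra universal property, which is precisely why the nonunitary variant $\sha_\bfk(A)^0$ rather than $\sha_\bfk(\uni{A})$ is the right object for this one-generator statement.
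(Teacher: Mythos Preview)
Your proposal is correct and matches the paper's approach exactly: the paper does not supply an independent proof but simply cites Proposition~2.6 of \mcite{GK2} for the general nonunitary freeness of $\sha_\bfk(A)^0$ and states the theorem as its specialization to $A = x\bfk[x]$. Your additional step of translating the universal property from ``free on the algebra $A$'' to ``free on the single element $x$'' via the freeness of $x\bfk[x]$ as a nonunitary algebra is the natural (and implicit) reason the one-generator formulation holds, and is more detail than the paper itself provides.
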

\begin{remark}
{\rm
For the rest of the paper, we will only consider free commutative nonunitary Rota-Baxter algebras of weight 0. So the term weight 0 will sometimes be suppressed for notational simplicity.
}
\mlabel{rk:weight}
\end{remark}

Let $G$ be a semigroup and let $\bfk\,G=\sum_{g\in G} \bfk\,g$ be the semigroup nonunitary $\bfk$-algebra. A canonical $\bfk$-basis of $(\bfk\,G)^{\ot k}, k\geq 0$, is the set $G^{\ot k}:=\{g_1\ot \cdots \ot g_k\ |\  g_i\in G, 1\leq i\leq k\}$.
Let $G$ be a graded semigroup $G=\coprod_{i\geq 0} G_i$, $G_iG_j\subseteq G_{i+j}$ such that $|G_i|<\infty$, $i\geq 0$.
Then the mixable shuffle product $\shpr_1$ of weight $1$ is identified with the {\bf quasi-shuffle product} $\ast$ defined by Hoffman~\mcite{Ho2,EGsh,GZ2}.
\begin{notation}
{\rm
\begin{enumerate}
\item
To simplify the notation and to be consistent with the conventions in the literature of MZVs, we will identify $g_1\ot \cdots \ot g_k$ with the concatenation $g_1\cdots g_k$ unless there is a danger of confusion.
We also denote the weight $1$ mixable shuffle product $\shpr_1$ by $\ast$ and denote the corresponding mixable algebra $\sh_{\bfk,1}(A)$ by $\calh_A^\ast$. Similarly, when $A$ is taken to be a $\bfk$-module, we denote the weight zero mixable shuffle algebra $\sh_{\bfk,0}(A)$ by $\calh_A^\shf$.
\item
Further, if our multiplicatively defined semigroup $(G,\cdot)$ comes from an additive semigroup $S$ in the sense that $G=G_S:=\{[s]\ |\ s\in S\}$ such that $[s]\cdot [s']=[s+s']$. We then let $[s_1,\cdots,s_k]$ denote $[s_1]\cdots[s_k]$ (which is abbreviated from $[s_1]\ot \cdots \ot [s_k]$ by the previous notation).
This applies in particular to the case when $G$ is taken to be
\begin{equation}
G_{\geq n}:=G_{\ZZ_{\geq n}} \text{ where }
\ZZ_{\geq n}=\{s\in \ZZ\ |\ s\geq n\}, \quad n=0,1.
\mlabel{eq:g01}
\end{equation}
We will use the notation
\begin{equation}
\calh_{\geq n}=\calh_{\ZZ G_{\geq n}}=\bigoplus_{s_i\geq n, 1\leq i\leq k, k\geq 0} \bfk [s_1,\cdots,s_k], \quad
\calh_{\geq n}\qnonu=\bigoplus_{s_i\geq n, 1\leq i\leq k, k\geq 1} \bfk [s_1,\cdots,s_k], \quad n=0,1.
\mlabel{eq:qsh01}
\end{equation}
\end{enumerate}
}
\mlabel{no:mscase}
\end{notation}

\subsection{Rota-Baxter algebra freeness of shuffle algebras}
\mlabel{ss:free1}
In this section, we study the freeness of the shuffle algebra
$\calh\shf\lone$ for MZVs in the category of Rota-Baxter algebras.

Consider the set $X=\{x_0,x_1\}$. With the convention in
Notation~\mref{no:mscase}, we denote the shuffle algebra
$\calh\shf:=\calh\shf_{\hspace{-.2cm}\QQ\,X}$ whose underlying module is $\QQ\langle x_0,x_1\rangle$ (the noncommutative polynomial algebra) and which contains
the following nonunitary subalgebra
\begin{equation}
 \calh\shf\lone\nonu:= \calh\shf x_1
=\QQ\langle x_0, x_1\rangle x_1
=\bigoplus_{u_i\geq 0, 1\leq i\leq k, k\geq 0} \QQ x_0^{u_1}x_1x_0^{u_2}x_1\cdots x_0^{u_k}x_1.
\mlabel{eq:h1}
\end{equation}
Its unitarization is $\calh\shf\lone:=\QQ \oplus \calh\shf x_1.$

We now prove our first theorem on free Rota-Baxter algebras.
\begin{theorem}
The nonunitary shuffle algebra $\calh\shf\lone\nonu$ in Eq.~(\mref{eq:h1}), together with the left multiplication operator $I_0(w)=x_0w$, is the free commutative nonunitary Rota-Baxter algebra of weight 0 generated by $x_1$.
\mlabel{thm:free1}
\end{theorem}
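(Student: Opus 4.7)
The plan is to produce a Rota--Baxter algebra isomorphism $\phi:\sha(x\QQ[x])^0\to\calh\shf\lone\nonu$ that sends the generator $x$ to $x_1$, after which the universal property from Theorem~\mref{thm:free} transfers verbatim.

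First I would verify that $(\calh\shf\lone\nonu,\ssha,I_0)$ is a commutative nonunitary Rota--Baxter algebra of weight $0$. Commutativity of $\ssha$ is standard, and closure of $\calh\shf\lone\nonu$ under $\ssha$ and $I_0$ is immediate since shuffles of words ending in $x_1$ end in $x_1$ and left multiplication by $x_0$ preserves this property. The weight-$0$ Rota--Baxter relation
\[ I_0(u)\ssha I_0(v)=I_0(u\ssha I_0(v))+I_0(I_0(u)\ssha v) \]
is a direct rewriting of the recursion $(x_0 u)\ssha(x_0 v)=x_0(u\ssha x_0 v)+x_0(x_0 u\ssha v)$ that defines the shuffle product. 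Theorem~\mref{thm:free} then provides a unique Rota--Baxter homomorphism $\phi:\sha(x\QQ[x])^0\to\calh\shf\lone\nonu$ with $\phi(x)=x_1$.

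Next I would show $\phi$ is bijective using a bigrading plus induction. Both sides carry a natural bigrading by $(d,n)$: on $\sha(x\QQ[x])^0$ one takes $d=\sum v_i$ (the polynomial degree) and $n=k-1$ (the tensor length minus one), while on $\calh\shf\lone\nonu$ one takes $d$ to be the number of $x_1$'s and $n$ the number of $x_0$'s. Because $\phi(x)=x_1$ has bidegree $(1,0)$, $\phi$ is multiplicative, and $\phi\circ P_x=I_0\circ\phi$ shifts $n$ by one while fixing $d$, $\phi$ is bigraded. A stars-and-bars count gives $\binom{d+n-1}{n}$ for the dimension of each bigraded piece on both sides, so it suffices to prove surjectivity. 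I would do this by nested induction, outer on the total length $L=d+n$ of a basis word $w=x_0^{u_1}x_1\cdots x_0^{u_k}x_1$ and inner on the position $p$ of the first occurrence of $x_0$ in $w$ (setting $p=1$ if $u_1\geq 1$ and $p=\infty$ if $w$ contains no $x_0$). The cases $p=1$, $p=\infty$, and $p=2$ are handled directly: respectively $w=I_0(x_0^{u_1-1}x_1\cdots)$, $w=\frac{1}{k!}\,x_1\ssha x_1\ssha\cdots\ssha x_1$ ($k$ copies), and $w=x_1\ssha w''-I_0(x_1\ssha(w''/x_0))$ where $w''=x_0^{u_2}x_1\cdots x_0^{u_k}x_1$ and both $w''$ and $w''/x_0$ have length $<L$, hence lie in the image by the outer inductive hypothesis. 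For $p\geq 3$, the shuffle recursion applied to $x_1$ and $w''$ gives
\[ x_1\ssha w''=(p-1)\,w+R, \]
where the multiplicity $p-1$ counts the insertions of $x_1$ into the leading $x_1^{p-2}$ prefix of $w''$ (all of which recover $w$), and $R$ is a sum of basis words of length $L$ whose first $x_0$ sits at the strictly smaller position $p-1$; these belong to the image by the inner inductive hypothesis, so $w=\frac{1}{p-1}(x_1\ssha w''-R)$ lies in the image as well.

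The main obstacle is the combinatorial bookkeeping that pins down the multiplicity $p-1$ and confirms that every other shuffle term in $x_1\ssha w''$ lands in the stratum of strictly smaller $p$. Intuitively, inserting $x_1$ anywhere within or immediately after the leading $x_1^{p-2}$ run of $w''$ produces the initial $x_1^{p-1}$ of $w$, contributing $p-1$ identical summands equal to $w$, while any other insertion of $x_1$ leaves the initial run at length exactly $p-2$ and hence places the first $x_0$ of the resulting word at position $p-1$. Once this claim is pinned down the nested induction closes, surjectivity of $\phi$ on every bigraded piece follows, and the matching finite dimensions force bijectivity; transporting the universal property of $\sha(x\QQ[x])^0$ through $\phi$ then gives the theorem.
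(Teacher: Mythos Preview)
Your argument is correct and follows essentially the same route as the paper: obtain a homomorphism $\phi:\sha(x\QQ[x])^0\to\calh\shf\lone\nonu$ from the universal property, prove surjectivity via an induction that exploits the shuffle recursion $x_1\ssha w''$, and deduce injectivity from a dimension count on graded pieces.

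The differences are organizational. The paper uses a single grading by total word length $m$ and counts $2^{m-1}$ basis elements on each side; your bigrading $(d,n)$ refines this (indeed $\sum_{d+n=m}\binom{d+n-1}{n}=2^{m-1}$). For surjectivity, the paper runs a triple induction on the number $k$ of $x_1$'s, then on $m=u_2+\cdots+u_k$, then on the first index $n\geq 2$ with $u_n>0$; your double induction on total length $L$ and on the position $p$ of the first $x_0$ is a cleaner repackaging of the same mechanism---both ultimately isolate the leading-$w$ term in $x_1\ssha w''$ and throw the remainder into a smaller inductive stratum. Your treatment of the multiplicity $(p-1)$ and the verification that all other shuffle summands land at position $p-1$ is exactly the combinatorial core that the paper's Eq.~(\mref{eq:ind3}) encodes, though the paper only writes out the $n=2$ case explicitly. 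Neither approach buys anything the other does not; yours is arguably more transparent.
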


\begin{proof}
We first prove that $\calh\shf\lone\nonu$ is generated by $x_1$ as a nonunitary Rota-Baxter algebra. Let $R'$ be the nonunitary Rota-Baxter subalgebra of $\calh\shf\lone\nonu$ generated by $x_1$. By Eq.~(\mref{eq:h1}) we only need to show that
$x_0^{u_1}x_1x_0^{u_2}x_1\cdots x_0^{u_k}x_1$ is in $R'$ for all $u_i\geq 0, 1\leq i\leq k, k\geq 1$. Since
$$ x_0^{u_1}x_1x_0^{u_2}x_1\cdots x_0^{u_k}x_1= I_0^{u_1}(x_1x_0^{u_2}x_1\cdots x_0^{u_k}x_1),$$
we only need to show
\begin{claim}
$w:=x_1x_0^{u_2}x_1\cdots x_0^{u_k}x_1$ is in $R'$ for all $u_i\geq 0, 2\leq i\leq k, k\geq 1$.
\mlabel{cl:ind1}
\end{claim}
For this purpose, we apply induction on $k$. When $k=1$, we have $w=x_1$ which is in $R'$ by assumption. Suppose that Claim~\mref{cl:ind1} has been proved for $k=a\geq 1$. It remains to prove
\begin{claim}
$w=x_1x_0^{u_2}x_1\cdots x_0^{u_{a+1}}x_1$ is in $R'$.
\mlabel{cl:ind2}
\end{claim}
We prove Claim~\mref{cl:ind2} by a second induction on $m:=u_2+\cdots+u_{a+1}\geq 0$. When $m=0$, we have $u_2=\cdots =u_{a+1}=0$ and so $w=x_1^{a+1}$ which equals $\frac{1}{(a+1)!}x_1^{\ssha (a+1)}$ which is in $R'$. Assume that Claim~\mref{cl:ind2} has been proved for $m=b\geq 0$. It remains to prove
\begin{claim}
Any $w=x_1x_0^{u_2}x_1\cdots x_0^{u_{a+1}}x_1$ with $m=b+1$ is in $R'$.
\mlabel{cl:ind3}
\end{claim}
We prove Claim~\mref{cl:ind3} by a third induction on $n\geq 2$ such that $u_2 =\cdots =u_{n-1}=0$ and $u_n>1$. When $n=2$, we have $u_2>1$. Then by the definition of the shuffle product, we have
\begin{eqnarray}
 x_1 \ssha x_0^{u_2}x_1\cdots x_0^{u_{a+1}}x_1 &=&
x_1 (1 \ssha x_0^{u_2}x_1\cdots x_0^{u_{a+1}}x_1) + x_0(x_1 \ssha (x_0^{u_2-1}x_1\cdots x_0^{u_{a+1}}x_1) \notag \\
 &=& x_1x_0^{u_2}x_1\cdots x_0^{u_{a+1}}x_1 + x_0 w_1+ \cdots +x_0 w_r, \mlabel{eq:ind3}
\end{eqnarray}
where $w_1,\cdots,w_r$ are of the form $x_0^{v_1}x_1x_0^{v_2}x_1\cdots x_0^{v_{a+1}}x_1$ with $v_1,\cdots,v_{a+1}\geq 0$ and $v_2+\cdots+v_{a+1}\leq b$. Hence by the induction hypothesis for the third induction, we have $x_1x_0^{v_2}x_1\cdots x_0^{v_{a+1}}x_1\in R'$ and hence
$x_0^{v_1}x_1x_0^{v_2}x_1\cdots x_0^{v_{a+1}}x_1=
I_0^{v_1}(x_1x_0^{v_2}x_1\cdots x_0^{v_{a+1}}x_1)\in R'$. Thus $v_i$ and hence $x_0v_i=I_0(v_i)$ are in $R'$ for $1\leq i\leq r$.  Since $x_1$ is in $R'$ by the definition of $R'$ and $x_0^{u_2}x_1\cdots x_0^{u_{a+1}}x_1$ is in $R'$ by the induction hypothesis of the first induction, from Eq.~(\mref{eq:ind3}) we conclude that $x_1x_0^{u_2}x_1\cdots x_0^{u_{a+1}}x_1$ is in $R'$. This completes the third induction and proves Claim~\mref{cl:ind3}, which in turns completes the second induction and proves Claim~\mref{cl:ind2}, which in turn completes the first induction and proves Claim~\mref{cl:ind1}. Thus $\calh\shf\lone\nonu$ is a nonunitary Rota-Baxter algebra generated by $x$.

\smallskip

By the universal property of the free commutative nonunitary Rota-Baxter algebra $\sha(x\bfk[x])^0$ in Theorem~\mref{thm:free}, we have a homomorphism
$$ f: \sha(x\bfk[x])^0 \to \calh\shf\lone\nonu$$
of nonunitary Rota-Baxter algebras such that $f(x)=x_1$. Since we have shown that $\calh\shf\lone\nonu$ is a nonunitary Rota-Baxter algebra generated by $x$, $f$ is surjective.
Thus to prove the theorem, it remains to show that $f$ is injective.

First note that, for $\frakx:=x^{n_1}\ot \cdots \ot x^{n_k}\in \sha(x\bfk[x])^0$ with $n_k\geq 1, n_i\geq 0, 1\leq i\leq k$, we have
\begin{equation}
\frakx=x^{n_1} \shpr P_x(x^{n_2}\shpr P_x(\cdots P_x(x^{n_k})\cdots )).
\mlabel{eq:map1}
\end{equation}
Thus
\begin{equation}
f(\frakx)= x_1^{\ssha n_1} \ssha (x_0f(x^{n_2}\ot \cdots \ot x^{n_k}))=
x_1^{\ssha n_1} \ssha (x_0(x^{\ssha n_2} \ssha (x_0 (\cdots \ssha (x_0(x_1^{\ssha n_k})))))).
\mlabel{eq:map2}
\end{equation}

We next define gradings on $\sha(x\bfk[x])^0$ and on $\calh\shf\lone\nonu$ that make them graded algebras. For $\frakx=x^{n_1}\ot \cdots \ot x^{n_k}$ with $n_1\geq 1, n_i\geq 0, 1\leq i\leq k$, define $$ \deg(\frakx)=n_1+\cdots+n_k+k-1.$$
This defines a grading on $\sha(x\bfk[x])^0$. Let $\sha(x\bfk[x])^0_m$ be the $m$-th homogeneous subspace of $\sha(x\bfk[x])^0$. A basis of $\sha(x\bfk[x])^0_m$ consists of the elements $\frakx:=x^{n_1}\ot \cdots \ot x^{n_k}$ of $\sha(x\bfk[x])^0_m$ with $n_1+\cdots+n_k+k-1=m$.
Such an element can be uniquely determined from a string of $m-1$ $x$'s by replacing $0\leq i\leq m-1$ of the $x$'s by the tensor symbol $\ot$ and then amending an $x$ factor to the end. Thus there are
$$ \binc{m-1}{0}+\cdots +\binc{m-1}{m-1} = 2^{m-1}$$
such elements and $\dim(\sha(x\bfk[x])^0_m)=2^{m-1}$.

Similarly, for $x_0^{u_1}x_1\cdots x_0^{u_k}x_1$ with $u_i\geq 0, 1\leq i\leq k, k\geq 1$, define
$$\deg(x_0^{u_1}x_1\cdots x_0^{u_k}x_1)=u_1+\cdots+u_k+k.$$
This defines a grading on $\calh\shf\lone\nonu$. Let $\calh_m$ be the $m$-th homogenous subspace of $\calh\shf\lone\nonu$. A basis of $\calh_m$ consists of elements of the form $x_0^{u_1}x_1\cdots x_0^{u_k}x_1$ with $u_1+\cdots+u_k + k=m$. Such an element is uniquely determined from a string of $m-1$ $x_0$'s by replacing $0\leq i\leq m-1$ of the $x_0$'s by $x_1$'s and then amending an $x_1$ to the end. Thus there are also $2^{m-1}$ such basis elements and $\dim (\calh_m)=2^{m-1}$.

We note that, for $\frakx$ in Eq.~(\mref{eq:map1}), $\deg(\frakx)$ is the total number of $x$ and $P_x$ on the right hand side of the equation. By Eq.~(\mref{eq:map2}), the map $f$ converts each $x$ to an $x_1$ and each $P_x$ to an $x_0$. Thus $f:\sha(x\bfk[x])^0\to \calh\shf\lone\nonu$ is a graded algebra homomorphism. Hence $f$ restricts to $f_m:\sha(x\bfk[x])^0_m \to \calh_m$, $m\geq 1$. Since $f$ and hence $f_m$ is surjective and the dimensions of $\sha(x\bfk[x])^0_m$ and $\calh_m$ are the same, the linear map $f_m$ must be bijective. Thus $f$ is bijective and the proof of the theorem is completed.
\end{proof}

\section{Free Rota-Baxter algebra structure on the shuffle algebra $\calh_{\geq 0}\qnonu$}
\mlabel{sec:sh0}
We next show that the free Rota-Baxter algebra structure on the shuffle algebra $(\calh\shf\lone\nonu,\ssha)$ in fact comes from (i.e., is the restriction of) a larger shuffle algebra $\calh_{\geq 0}\qnonu$ which is also a free Rota-Baxter algebra with one generator.
We first rephrase in Section~\mref{ss:shqsh} the free Rota-Baxter algebra structure on $\calh\shf\lone\nonu$ in terms of $\calh_{\geq 1}\qnonu$, naturally a subset of $\calh_{\geq 0}\qnonu$. Then in Section~\mref{ss:sh0}, we extend this free Rota-Baxter algebra structure on $\calh_{\geq 1}\qnonu$ to a Rota-Baxter algebra structure on $\calh_{\geq 0}\qnonu$, and show in Section~\mref{ss:free0} that this Rota-Baxter algebra on $\calh_{\geq 0}\qnonu$ is free.

\subsection{Free Rota-Baxter algebra on $\calh_{\geq 1}\qnonu$}
\mlabel{ss:shqsh}
Recall the notations from Notation~\mref{no:mscase}:
$$
\calh_{\geq n}\qnonu=\bigoplus_{s_i\geq n, 1\leq i\leq k, k\geq 1} \bfk [s_1,\cdots,s_k], \quad n=0,1.
$$
The map
\begin{equation}
\eta: \calh\shf\lone\nonu \to \calh_{\geq 1}\qnonu, \quad x_0^{s_1-1}x_1\cdots x_0^{s_k-1}x_1 \mapsto [s_1,\cdots,s_k].
\mlabel{eq:shqsh}
\end{equation}
defines a bijection. By transporting of structures, from the Rota-Baxter algebra $(\calh\shf\lone\nonu,\ssha,I_0)$ in Theorem~\mref{thm:free1}, we obtain a Rota-Baxter algebra $(\calh_{\geq 1}\qnonu, \qssha,I)$ where
\begin{eqnarray}
&&[\vec{s}]\qssha [\vec{t}]:= \eta(\eta^{-1}([\vec s])\ssha \eta^{-1}([\vec t])), \quad [\vec{s}],[\vec{t}]\in \calh_{\geq 1}\qnonu, \mlabel{eq:qssha}
\\
&& I:\calh_{\geq 1}\qnonu\to \calh_{\geq 1}\qnonu, I([\vec{s}]):= [\vec{s}+\vec{e}_1], \quad [\vec{s}]\in \calh_{\geq 1}\qnonu,
\mlabel{eq:Iop}
\end{eqnarray}
where $\vec{e}_1=(1,0,\cdots,0)$ is the first standard basis of $\ZZ^k$ if $k$ is the dimension of $\vec{s}$.
Then Theorem~\mref{thm:free1} can be rephrased as
\begin{theorem}
The Rota-Baxter algebra $(\calh_{\geq 1}\qnonu, \qssha,I)$ is the free commutative nonunitary Rota-Baxter algebra of weight 0 generated by $\eta(x_1)=[1]$.
\mlabel{thm:free1q}
\end{theorem}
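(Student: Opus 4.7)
The strategy is to show that the map $\eta$ in Eq.~(\mref{eq:shqsh}) is in fact an isomorphism of commutative nonunitary Rota-Baxter algebras from $(\calh\shf\lone\nonu,\ssha,I_0)$ to $(\calh_{\geq 1}\qnonu,\qssha,I)$, and then invoke Theorem~\mref{thm:free1} together with the fact that freeness of an object with a distinguished generator transfers along isomorphisms of Rota-Baxter algebras.

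I would proceed as follows. First, $\eta$ is a $\QQ$-linear bijection on basis elements, as already noted following Eq.~(\mref{eq:shqsh}). Second, the product $\qssha$ on $\calh_{\geq 1}\qnonu$ is \emph{defined} by Eq.~(\mref{eq:qssha}) precisely so that $\eta(u\ssha v)=\eta(u)\qssha \eta(v)$ for all $u,v\in \calh\shf\lone\nonu$; thus $\eta$ is automatically an algebra isomorphism. Third, and this is the only non-tautological point, I would verify the compatibility of $\eta$ with the Rota-Baxter operators, namely $\eta\circ I_0 = I\circ \eta$. For a basis element $w=x_0^{s_1-1}x_1\cdots x_0^{s_k-1}x_1$ of $\calh\shf\lone\nonu$, we have
\[
I_0(w)=x_0\, x_0^{s_1-1}x_1\cdots x_0^{s_k-1}x_1 = x_0^{(s_1+1)-1}x_1\cdots x_0^{s_k-1}x_1,
\]
so $\eta(I_0(w))=[s_1+1,s_2,\ldots,s_k]$, which equals $I([s_1,\ldots,s_k])=I(\eta(w))$ by the definition~(\mref{eq:Iop}) of $I$. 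Extending linearly gives $\eta\circ I_0=I\circ \eta$. Combined with the previous point, $\eta$ is a Rota-Baxter algebra isomorphism.

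Finally, universality of the free commutative nonunitary Rota-Baxter algebra transfers along such an isomorphism: given any commutative nonunitary Rota-Baxter algebra $(R,P)$ of weight $0$ and an element $r\in R$, Theorem~\mref{thm:free1} provides a unique Rota-Baxter homomorphism $g:\calh\shf\lone\nonu\to R$ with $g(x_1)=r$, and then $g\circ\eta^{-1}:\calh_{\geq 1}\qnonu\to R$ is the unique Rota-Baxter homomorphism sending $\eta(x_1)=[1]$ to $r$. This proves that $(\calh_{\geq 1}\qnonu,\qssha,I)$ is the free commutative nonunitary Rota-Baxter algebra of weight $0$ on the single generator $[1]$.

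There is no real obstacle in this argument; the content of Theorem~\mref{thm:free1q} is essentially a repackaging of Theorem~\mref{thm:free1} under the bijection~$\eta$, and the only thing that must actually be checked is the routine identity $\eta\circ I_0=I\circ \eta$ above. The substantive work has already been carried out in the proof of Theorem~\mref{thm:free1}.
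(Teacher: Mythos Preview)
Your proposal is correct and matches the paper's own treatment: the paper presents Theorem~\mref{thm:free1q} explicitly as a rephrasing of Theorem~\mref{thm:free1} obtained by transport of structure along the bijection $\eta$, without a separate proof. The only thing you make explicit beyond the paper is the verification that the concretely defined shift operator $I$ in Eq.~(\mref{eq:Iop}) really is $\eta\circ I_0\circ \eta^{-1}$, which is indeed the one non-tautological point.
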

We will call $\qssha$ the {\bf shuffle product on $\calh_{\geq 1}\qnonu$}. By the recursive definition of the shuffle product $\ssha$ on $\calh\shf\lone$, we obtain the recursive description of the shuffle product $\qssha$ on $\calh_{\geq 1}\qnonu$. For $\vec{s}=(s_1,\cdots,s_m), \vec{t}=(t_1,\cdots,t_n)$, we have (see also~\cite[Proposition 4.3]{GX2})
\begin{equation}
[\vec{s}]\qssha [\vec{t}] = \left\{\begin{array}{ll}
I([\vec{s}-\vec{e}_1]\qssha [\vec{t}]) + I([\vec{s}]\qssha [\vec{t}-\vec{e}_1]), & s_1,t_1>1, \\
{}[1, \vec{s}'\qssha \vec{t}]+ I([\vec{s}]\qssha [\vec{t}-\vec{e}_1]), & \vec{s}=[1,\vec{s}'], t_1>1, \\
I([\vec{s}-\vec{e}_1]\qssha [\vec{t}])+ [1,[\vec{s}]\qssha [\vec{t}']], & s_1>1, \vec{t}=[1,\vec{t}'],\\
{}[1,[\vec{s}']\qssha [\vec{t}]] + [1,[\vec{s}] \qssha [\vec{t}']], &\vec{s}=[1,\vec{s}'], \vec{t}=[1,\vec{t}'].
\end{array}
\right .
\mlabel{eq:shpr1}
\end{equation}

\subsection {Rota-Baxter algebra on $\calh_{\geq 0}\qnonu$}
\mlabel{ss:sh0}

The operator $I$ in Eq.~(\mref{eq:Iop}) extends to an operator
\begin{equation}
 I: \calh_{\ge 0}\qnonu \to
\calh_{\ge 0}\qnonu,\quad [\vec s]\mapsto [\vec s +\vec e_1].
\mlabel{eq:e1shift}
\end{equation}

\begin {theorem} The shuffle product $\qssha$ on $\calh _{\geq 1}\qnonu$ has a unique extension to a commutative associative product on $\calh _{\ge 0}\qnonu$, still denoted by $\qssha$, such that $\nvec{0}\qsshab \nvec{s}=[0,\vec{s}]$ and such that  $I$ is a Rota-Baxter operator on $\calh_{\ge 0}\qnonu$ of weight 0.
\mlabel{thm:shbax}
\end {theorem}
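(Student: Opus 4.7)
The approach is to produce the extension by a recursion on the total weight $d(\vec s) := s_1 + \cdots + s_k + k$ for $\vec s = (s_1,\ldots,s_k)$, and then verify commutativity, associativity, and the Rota-Baxter identity by induction on the same quantity. Uniqueness is immediate: any product with the claimed properties must satisfy (i) $[0]\qssha [\vec t] = [0,\vec t]$ by hypothesis; (ii) for $\vec s = [0,\vec s']$ with $\vec s'$ nonempty, writing $[\vec s] = [0]\qssha [\vec s']$ and invoking associativity forces $[\vec s]\qssha [\vec t] = [0, \vec s' \qssha \vec t]$; (iii) for $s_1,t_1\ge 1$, writing $[\vec s] = I([\vec s - \vec e_1])$ and $[\vec t] = I([\vec t - \vec e_1])$ and invoking the Rota-Baxter identity forces
\[ [\vec s]\qssha [\vec t] = I([\vec s - \vec e_1]\qssha [\vec t]) + I([\vec s]\qssha [\vec t - \vec e_1]). \]
Each right-hand side is a product of elements of strictly smaller total weight, so the product is uniquely determined.

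For existence I would take (i)--(iii), together with their $\vec s \leftrightarrow \vec t$ symmetric forms, as a defining recursion of $\qssha$ on $\calh_{\geq 0}\qnonu$. The rules are unambiguous: the only genuine overlap occurs when both $\vec s$ and $\vec t$ start with $0$, and there the two candidate expressions produced by (ii) coincide after one further recursive step, their common value being a prefix of zeros followed by $\vec s' \qssha \vec t'$. I would then verify compatibility with the product already defined on $\calh_{\geq 1}\qnonu$ by matching against Eq.~(\mref{eq:shpr1}): when $s_1,t_1>1$, rule (iii) reproduces the first clause of (\mref{eq:shpr1}); when $s_1 = 1$, one has $\vec s - \vec e_1 = [0,\vec s']$, and rule (ii) converts $I([\vec s -\vec e_1]\qssha [\vec t])$ into $[1, \vec s' \qssha \vec t]$, recovering the second clause.

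Commutativity follows immediately by induction because each rule (once its $\vec s \leftrightarrow \vec t$ swap is included) becomes symmetric in its two arguments after applying the inductive hypothesis to the strictly smaller products appearing on the right. The Rota-Baxter identity $I(x)\qssha I(y) = I(x\qssha I(y)) + I(I(x)\qssha y)$ is literally rule (iii) and so holds by construction. The main obstacle is associativity, $([\vec s]\qssha [\vec t])\qssha [\vec u] = [\vec s]\qssha([\vec t]\qssha [\vec u])$, which I plan to prove by induction on $d(\vec s)+d(\vec t)+d(\vec u)$ with a case split according to how many of $\vec s,\vec t,\vec u$ begin with $0$. If any of them does, rules (i) or (ii) allow the leading $0$ to be pulled outside, reducing the identity to associativity at strictly smaller total weight. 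If all three begin with $\geq 1$, then a single application of rule (iii) on each side, followed by the inductive hypothesis applied to every triple product of lower weight that appears, expresses both sides as the common symmetric sum
\begin{equation*}
I([\vec s-\vec e_1]\qssha [\vec t]\qssha [\vec u]) + I([\vec s]\qssha [\vec t-\vec e_1]\qssha [\vec u]) + I([\vec s]\qssha [\vec t]\qssha [\vec u-\vec e_1]),
\end{equation*}
closing the induction.
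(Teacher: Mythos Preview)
Your proof is correct and follows essentially the same approach as the paper's: the same recursive definition via the rules (i)--(iii), the same well-definedness check at the overlap $[0,\vec s'],[0,\vec t']$, the same compatibility check with Eq.~(\mref{eq:shpr1}), and the same induction on $d(\vec s)+d(\vec t)+d(\vec u)$ for associativity with the same case split. Your three-term symmetric expression in the all-positive case is a slightly slicker repackaging of the paper's five-term comparison (the two extra terms in the paper arise from expanding $(\vec s-\vec e_1)\qssha(\vec t\qssha\vec u)$ once more), but the argument is the same.
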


\begin{proof} We first prove the existence.
For $\nvec{s}=(s_1,\cdots,s_i)\in \ZZ_{\geq 0}^i$ and
$\nvec{t}=(t_1,\cdots,t_j)\in \ZZ_{\geq 0}^j$, we use induction on
$$c=s_1+\cdots+s_i+i+t_1+\cdots+t_j+j$$
to define $\nvec{s}\qsshab \nvec{t}$. Note that we have $c\geq 2$.

When $c=2$, then $i=j=1$ and $\nvec{s}=\nvec{t}=(0)$. Then define
$$ \nvec{s} \qsshab \nvec{t}= [0,0].$$
Suppose that $\nvec{s}\qsshab \nvec{t}$ have been defined for $c= n$.
Then for $\nvec{s}$ and $\nvec{t}$ with $c=n+1$, define
\begin{equation}
\nvec{s} \qsshab \nvec{t} = \left \{\begin{array}{ll} [0,\vec{s}'\qsshab
\vec{t}], & \nvec{s}=[0,\vec{s}'],
\\
 {}[0,\vec{s}\qsshab \vec{t}'], & \nvec{t}=[0,\vec{t}'],\\
I([\vec{s}-\vec{e}_1]\qsshab \nvec{t}) +I( \nvec{s}\qsshab
[\vec{t}-\vec{e}_1]), & {\rm otherwise}.
\end{array} \right .
\mlabel{eq:extsh}
\end{equation}
Then the terms on the right hand side are well-defined by the
induction hypothesis. We note that if $\nvec{s}=[0,\vec{s}']$ and
$\nvec{t}=[0,\vec{t}']$, then we have
$\nvec{s}\qsshab\nvec{t}=[0,0,\vec{s}'\qsshab \vec{t}'].$ So there
is no ambiguity in the above definition. It follows from
Eq.~(\mref{eq:shpr1}) that the restriction of the new
product $\qssha$ to $\calh_{\geq 1}\qnonu$ coincides with the product
$\qssha$ on $\calh_{\geq 1}\qnonu$.

Clearly $\qsshab$ is commutative. It is also clear that
Eq.~(\mref{eq:extsh}) is the only possible way to define $\qsshab$
satisfying the conditions in the theorem.

We next verify the associativity: for $\nvec{s}=[s_1,\cdots,s_i],
\nvec{t}=[t_1,\cdots,t_j]$ and $\nvec{u}=[u_1,\cdots,u_k]$,
\begin{equation} (\nvec{s} \qsshab \nvec{t}) \qsshab \nvec{u}
=\nvec{s} \qsshab (\nvec{t}\qsshab \nvec{u}). \mlabel{eq:shass}
\end{equation}
For this we use induction on
$$ d =s_1+\cdots+s_i+i+t_1+\cdots+t_j+j+u_1+\cdots+u_k+k.$$
Then $d\geq 3$. If $d=3$, then $i=j=k=0$ and
$\nvec{s}=\nvec{t}=\nvec{u}=[0]$. So both sides of
Eq.~(\mref{eq:shass}) is $[0,0,0]$. Suppose Eq.~(\mref{eq:shass})
has been verified for $d=n$ and take $\nvec{s},\nvec{t},\nvec{u}$
with $d=n+1$. If $\nvec{s}=[0,\vec{s}']$, then Eq.~(\mref{eq:shass})
means $$ [0,\big(\vec{s}'\qsshab \vec{t}\big) \qsshab \vec{u}] =
[0,\vec{s}'\qsshab \big(\vec{t}\qsshab \vec{u}\big)]$$ which follows
from the induction hypothesis. Similar arguments works if the first
component of $\nvec{t}$ or $\nvec{u}$ is 0.

It remains to consider the case when the first components of
$\nvec{s}, \nvec{t}$ and $\nvec{u}$ are all non-zero. Then by
Eq.~(\mref{eq:extsh}),
\begin{eqnarray*}
(\nvec{s}\qsshab \nvec{t})\qsshab \nvec{u} &=&
\big( I((\vec{s}-\vec{e}_1)\qsshab \vec{t} )+I(\vec{s}\qsshab (\vec{t}-\vec{e}_1))\big) \qsshab \vec{u} \\
&=& I\big( ((\vec{s}-\vec{e}_1)\qsshab \vec{t} )\qsshab \vec{u} \big)
+ I\big( I((\vec{s}-\vec{e}_1)\qsshab\vec{t} )\qsshab (\vec{u}-\vec{e}_1)\big) \\
&&+ I\big( (\vec{s} \qsshab (\vec{t}-\vec{e}_1))\qsshab \vec{u}\big) + I
\big(I (\vec{s}\qsshab (\vec{t}-\vec{e}_1))\ssha(\vec{u}-\vec{e}) \big)
\end{eqnarray*}
Applying the induction hypothesis to the first term on the right
hand side and use Eq.~(\mref{eq:extsh}) again, we have
\begin{eqnarray*}
(\nvec{s}\qsshab \nvec{t})\qsshab \nvec{u} &=& I\big(
(\vec{s}-\vec{e}_1)\qsshab I((\vec{t}-\vec{e}_1)\qsshab \vec{u} )\big)
+ I \big( (\vec{s}-\vec{e}_1)\qsshab I(\vec{t}\qsshab(\vec{u}-\vec{e}_1))\big) \\
&& + I\big( I((\vec{s}-\vec{e}_1)\qsshab\vec{t} )\qsshab
(\vec{u}-\vec{e}_1)\big)
+ I\big( (\vec{s} \qsshab (\vec{t}-\vec{e}_1))\qsshab \vec{u}\big) \\
&& + I \big(I (\vec{s}\qsshab
(\vec{t}-\vec{e}_1))\qsshab(\vec{u}-\vec{e}_1) \big).
\end{eqnarray*}

By the same argument, we find
\begin{eqnarray*}
\nvec{s} \qsshab (\nvec{t}\qsshab \nvec{u}) &=& I\big(
(\vec{s}-\vec{e}_1)\qsshab I((\vec{t}-\vec{e}_1)\qsshab \vec{u})\big)
+ I \big( \vec{s}\qsshab((\vec{t}-\vec{e}_1)\qsshab \vec{u})\big) \\
&& + I\big( (\vec{s}-\vec{e}_1)\qsshab I(
\vec{t}\qsshab(\vec{u}-\vec{e}_1) )\big) + I \big(
I((\vec{s}-\vec{e}_1)\qsshab \vec{t})\qsshab (\vec{u}-\vec{e}_1)\big)
\\ &&
+ I\big ( I (\vec{s}\qsshab (\vec{t}-\vec{e}_1))\qsshab
(\vec{u}-\vec{e}_1)\big).
\end{eqnarray*}
This agrees term-wise with the above sum for $(\nvec{s}\qsshab
\nvec{t})\qsshab \nvec{u}$ with another use of the induction
hypothesis.
\end{proof}

\subsection{Free Rota-Baxter algebra on $\calh_{\ge 0}\qnonu$}
\mlabel{ss:free0}
We now show that the Rota-Baxter algebra $\calh_{\ge 0}\qnonu$ obtained in Theorem~\mref{thm:shbax} is in fact free.

\begin{theorem}
The Rota-Baxter algebra $(\calh_{\ge 0}\qnonu,I)$ is the free commutative nonunitary Rota-Baxter algebra of weight 0 generated by $\lrb{0}$.
\mlabel{thm:hfree}
\end{theorem}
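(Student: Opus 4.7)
The plan is to follow the strategy of Theorem~\mref{thm:free1}: invoke the universal property of $\sha(x\bfk[x])^0$ given by Theorem~\mref{thm:free} to obtain the unique nonunitary Rota-Baxter algebra homomorphism
\[ f : \sha(x\bfk[x])^0 \to \calh_{\geq 0}\qnonu, \quad f(x) = \lrb{0}, \]
and then prove that $f$ is both surjective and injective. The argument for injectivity will be a graded dimension count mirroring the second half of the proof of Theorem~\mref{thm:free1}.

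For surjectivity, let $R'$ denote the nonunitary Rota-Baxter subalgebra of $\calh_{\geq 0}\qnonu$ generated by $\lrb{0}$. I would prove $R' = \calh_{\geq 0}\qnonu$ by a double induction, first on the length $k$ of $\vec s$ and then on the first component $s_1$. The base case $k=1$ is immediate from $[s_1] = I^{s_1}(\lrb{0})$. For the step: when $s_1 = 0$, the identity $\lrb{0} \qsshab [\vec s\,'] = [0, \vec s\,']$ supplied by Theorem~\mref{thm:shbax} lets me rewrite $[0, s_2, \ldots, s_k]$ as a product of $\lrb{0}$ with a shorter element, which lies in $R'$ by the outer hypothesis on $k$; when $s_1 \geq 1$, I apply $I$ to $[s_1 - 1, s_2, \ldots, s_k]$, which is in $R'$ by the inner hypothesis on $s_1$, to recover $[s_1, \ldots, s_k]$.

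For injectivity, I introduce on $\calh_{\geq 0}\qnonu$ the grading
\[ \deg([s_1, \ldots, s_k]) := s_1 + \cdots + s_k + k \]
and retain the grading $\deg(x^{n_1} \ot \cdots \ot x^{n_k}) = n_1 + \cdots + n_k + k - 1$ on $\sha(x\bfk[x])^0$ used in the proof of Theorem~\mref{thm:free1}. The crux of the argument is verifying that the new grading makes $\qssha$ a graded product. This I would prove by induction on the integer $c$ appearing in the proof of Theorem~\mref{thm:shbax}, case-checking the three branches of the recursion in Eq.~(\mref{eq:extsh}): prefixing a $0$ on either side raises both the length and the total of the components by $1$, while in the third branch the outer $I$ exactly compensates for the degree drop caused by the $\vec e_1$-subtraction. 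With this in place, $f$ is graded for free, since $x$ and $\lrb{0}$ both have degree $1$, $P_x$ and $I$ both raise degree by $1$, and both products preserve degree.

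The conclusion is a dimension count. Basis elements of the $m$-th homogeneous piece of $\calh_{\geq 0}\qnonu$ are the sequences $[s_1, \ldots, s_k]$ with $s_i \geq 0$, $k \geq 1$, and $s_1 + \cdots + s_k = m - k$; a stars-and-bars count gives
\[ \sum_{k=1}^m \binc{m-1}{k-1} = 2^{m-1}, \]
matching the dimension $2^{m-1}$ of $\sha(x\bfk[x])^0_m$ from the proof of Theorem~\mref{thm:free1}. Each restriction $f_m$ is then a surjection between finite-dimensional spaces of equal dimension, forcing $f$ itself to be bijective. I expect the main technical obstacle to lie in the grading check on $\calh_{\geq 0}\qnonu$: the three cases in Eq.~(\mref{eq:extsh}) must all be shown to contribute degree consistently, after which the remainder of the argument is a direct transcription of the corresponding part of the proof of Theorem~\mref{thm:free1}.
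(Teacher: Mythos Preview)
Your proposal is correct, but the paper proves injectivity by a shorter and more explicit route. After surjectivity (which the paper obtains from the closed formula $[s_1,\ldots,s_k]=I^{s_1}\big([0]\qsshab I^{s_2}([0]\qsshab\cdots I^{s_k}([0])\cdots)\big)$---your double induction is just this formula read inductively), the paper computes the image of each basis tensor directly:
\[
\phi(x^{n_0}\ot x^{n_1}\ot\cdots\ot x^{n_\ell})
= \big[\underbrace{0,\ldots,0}_{n_0},\,1,\underbrace{0,\ldots,0}_{n_1-1},\,1,\ldots,1,\underbrace{0,\ldots,0}_{n_\ell-1}\big],
\]
with the convention that an empty block of zeros collapses adjacent $1$'s into a $2$, etc. Since distinct basis tensors go to distinct basis vectors of $\calh_{\ge 0}\qnonu$, injectivity is immediate. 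This bypasses the need to check that $\qsshab$ is graded, and as a bonus it exhibits the inverse isomorphism explicitly (read a vector $[s_1,\ldots,s_k]$ as a binary word in $\{0,1\}$ and regroup). Your graded dimension count is a perfectly valid alternative---the recursion in Eq.~(\mref{eq:extsh}) is visibly degree-homogeneous, so the grading check goes through---but it is a detour compared with the paper's one-line bijection on bases.
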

\begin{proof}
Instead of checking that the Rota-Baxter algebra $(\calh_{\ge 0}\qnonu,I)$ satisfies the desired universal property, we will show that this Rota-Baxter algebra is isomorphic to the free commutative nonunitary Rota-Baxter algebra $\sha(x\bfk[x])^0$ in Theorem~\mref{thm:free}.

\begin{lemma}
The nonunitary Rota-Baxter algebra $(\calh_{\ge 0}\qnonu,I)$ is generated by $\lrb{x}$.
\mlabel{lem:gen}
\end{lemma}
\begin{proof}
Let $R$ be the nonunitary Rota-Baxter sub-algebra of $\calh_{\ge 0}\qnonu$ generated by $\lrb{0}$. We just need to show that all the basis elements $\nvec{s}=(s_1,\cdots,s_k)\in \ZZ_{\geq 0}^k$ can be obtained by repeated applications of multiplication and the Rota-Baxter operator $I$ to $\lrb{0}$. But this follows since
$$ (s_1,\cdots,s_k)=I^{s_1}(\lrb{0}\qsshab I^{s_2}([0]\qsshab I^{s_3}\cdots I^{s_k}(\lrb{0})\cdots))$$
whose proof follows from a simple induction.
\end{proof}

Since $\sha (x\bfk[x])^0$ is the free commutative nonunitary Rota-Baxter algebra generated by $x$, by its universal property, there is a unique homomorphism of commutative nonunitary Rota-Baxter algebras
$$ \phi: \sha(x\bfk[x])^0 \to \calh_{\ge 0}\qnonu$$
such that $\phi(x)=\lrb{0}.$ By Lemma~\mref{lem:gen}, $\phi$ is surjective. By an inductive argument, we see that
$$ \phi(x^n)=\lrb{\underbrace{0,\cdots,0}_{n{\rm -times}}}$$
and in general
$$ \phi(x^{n_0}\ot x^{n_1}\ot \cdots \ot x^{n_\ell})
= \lrb{\underbrace{0,\cdots,0}_{n_0{\rm -times}},1,
\underbrace{0,\cdots,0}_{(n_1-1){\rm -times}},1,\cdots,1,
\underbrace{0,\cdots,0}_{(n_\ell-1){\rm -times}}}
$$
with the convention that if $n_i=0$, then
$(1,\underbrace{0,\cdots,0}_{(n_i-1){\rm -times}},1)=2$,
and if $n_i=n_{i+1}=0$, then
$(1,\underbrace{0,\cdots,0}_{(n_i-1){\rm -times}},1,\underbrace{0,\cdots,0}_{(n_{i+1}-1){\rm -times}},1)=3$, {\em etc.} Note that $n_\ell\geq 1$ by definition.
Now it is clear that $\phi$ sends two distinct basis elements of $\sha(x\bfk[x])^0$ to distinct basis elements of $\calh_{\ge 0}\qnonu$.
Therefore $\phi$ is injective. This completes the proof.
\end{proof}

\section{Extended shuffle relation and double shuffle relations from free Rota-Baxter
algebras} \mlabel{sec:app} We apply the freeness property of the
shuffle algebras $\calh\shf\lone\nonu$ and $\calh_{\geq 0}\qnonu$ as
nonunitary Rota-Baxter algebras to study multiple polylogarithms and
MZVs. We first generate the shuffle relation of multiple
polylogarithms in Section~\mref{ss:poly}. We then generate the
extended shuffle relation of MZVs in Section~\mref{ss:sh1}. In
Section~\mref{ss:dsh1}, we derive the extended double shuffle
relations of Ihara, Kaneko and Zagier~\mcite{IKZ}.

\subsection {Shuffle relations of multiple polylogarithms}
\mlabel{ss:poly} We first construct a Rota-Baxter algebra for the
study of multiple polylogarithms and MZVs.
Let $\CC\{\{\vep, \vep^{-1}\}$ be the algebra of convergent Laurent
series, regarded as a subalgebra of the algebra of (germs of)
complex valued functions meromorphic in a neighborhood of $\vep=0$.
We take $\ln (-\vep)$ to be component which is analytic on $\CC\backslash [0,\infty)$.

By \cite[Lemma 3.2]{GZ}, we have
\begin{lemma}
The function $\ln (-\vep) $ is transcendental over $\CC\{\{\vep,\vep^{-1}\}$ and hence over $\CC\{\{\vep\}\}$.
\mlabel{lem:vep}
\end{lemma}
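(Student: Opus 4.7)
The plan is a standard derivation argument. The key observation is that differentiation $\tfrac{d}{d\vep}$ preserves the field $K := \CC\{\{\vep,\vep^{-1}\}\}$ (germs of meromorphic functions at $\vep=0$) and sends $\lne$ to $1/\vep$, yet no element of $K$ has derivative equal to $1/\vep$. Indeed, if $b(\vep) = \sum_{k\ge -N} c_k \vep^k \in K$, then $b'(\vep) = \sum_{k} k\, c_k\, \vep^{k-1}$ has vanishing coefficient on $\vep^{-1}$, because that coefficient would come from $k=0$ and is therefore killed by the factor $k$. This elementary fact about formal antiderivatives of Laurent series is what the whole argument reduces to.

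Suppose, for contradiction, that $\lne$ is algebraic over $K$. Since $K$ is a field, I would pick a monic minimal polynomial
$$P(T) = T^n + b_{n-1}T^{n-1} + \cdots + b_0 \in K[T], \qquad n \ge 1,$$
with $P(\lne)=0$. Applying $\tfrac{d}{d\vep}$ to the identity $P(\lne)=0$ gives
$$\sum_{i=0}^n b_i'(\vep)\,\lne^{\,i} \;+\; \frac{1}{\vep}\sum_{i=1}^n i\,b_i(\vep)\,\lne^{\,i-1} \;=\; 0.$$
Because $b_n = 1$, the coefficient $b_n'(\vep)$ of $\lne^{\,n}$ vanishes, so the left side is a polynomial in $\lne$ over $K$ of degree at most $n-1$ that evaluates to zero. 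By minimality of $P$, each of its coefficients must be zero.

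The decisive step — which I expect to be the only nontrivial part — is to extract the coefficient of $\lne^{\,n-1}$: it equals $b_{n-1}'(\vep) + n/\vep$. Setting this to zero would force the Laurent series $b_{n-1}(\vep) \in K$ to satisfy $b_{n-1}'(\vep) = -n/\vep$, contradicting the observation from the first paragraph, since $-n/\vep$ has nonzero coefficient on $\vep^{-1}$ while any derivative of an element of $K$ has zero coefficient there. This contradiction establishes that $\lne$ is transcendental over $K$; transcendence over the subfield $\CC\{\{\vep\}\} \subset K$ is then immediate.
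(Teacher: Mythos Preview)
Your argument is correct and complete. The paper does not supply its own proof of this lemma; it simply cites \cite[Lemma~3.2]{GZ}. Your self-contained differentiation argument --- reducing the question to the fact that the residue of the derivative of any element of $K=\CC\{\{\vep,\vep^{-1}\}\}$ vanishes, so that $1/\vep$ has no antiderivative in $K$ --- is the standard route to such transcendence statements and is almost certainly what the cited result does as well.
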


\begin{defn}
{\
Let
$C^{Log}_S(-\infty,0)$ denote the subset of $\CC\{\{\vep,\vep^{-1}\}[\ln(-\vep)]$ as functions on $(-\infty,0)$ consisting of $f$ such that, for every $n\in \NN$, we have $\lim\limits_{\vep\to -\infty} \vep ^nf(\vep)= 0$.
}
\mlabel{de:logc}
\end{defn}

\begin {lemma} The complex vector space $C^{Log}_S(-\infty,0)$ is closed under function multiplication. The operator
\begin{equation}
J: C^{Log}_S(-\infty,0) \to C^{Log}_S(-\infty,0), \quad f\mapsto \int _{-\infty}^\vep f(t ) dt, f\in C^{Log}_S(-\infty,0),
\mlabel{eq:logint}
\end{equation}
is a Rota-Baxter operator of weight 0.
\mlabel{lem:jop}
\end {lemma}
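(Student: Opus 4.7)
My plan is to handle three tasks in turn: closure of $C^{Log}_S(-\infty,0)$ under multiplication, well-definedness of $J$ as a self-map of this space, and the weight-$0$ Rota-Baxter identity for $J$. I would separate the analytic estimates (decay, absolute convergence) from the algebraic identity (integration by parts).

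For closure, multiplication on the ambient ring $\CC\{\{\vep,\vep^{-1}\}\}[\ln(-\vep)]$ is unambiguous by the transcendence of $\ln(-\vep)$ over $\CC\{\{\vep,\vep^{-1}\}\}$ granted by Lemma~\mref{lem:vep}, which identifies this as a genuine polynomial ring. The rapid-decay condition is preserved: if $f,g\in C^{Log}_S(-\infty,0)$, then $g(\vep)\to 0$ as $\vep\to-\infty$, so $g$ is bounded on some interval $(-\infty,-M)$, and hence $\vep^n f(\vep)g(\vep)=(\vep^n f(\vep))\cdot g(\vep)\to 0$ for every $n\in\NN$.

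For well-definedness of $J$, first the improper integral $\int_{-\infty}^{\vep}f(t)\,dt$ converges absolutely: the decay hypothesis yields $|f(t)|\leq C_N|t|^{-N}$ for $|t|$ large and any $N\in\NN$, so taking $N\geq 2$ makes the integrand $L^1$ near $-\infty$. Next, to verify that $J(f)$ stays in the ambient ring, I would produce an indefinite antiderivative $\widetilde F\in\CC\{\{\vep,\vep^{-1}\}\}[\ln(-\vep)]$ of $f$ by applying integration by parts to monomials $\vep^j\ln(-\vep)^k$: for $j\neq -1$ parts reduces the power of the logarithm, and for $j=-1$ one uses $\int\vep^{-1}\ln(-\vep)^k\,d\vep=\ln(-\vep)^{k+1}/(k+1)$. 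Since $f$ decays rapidly, the Cauchy criterion gives a finite limit $c:=\lim_{\vep_0\to-\infty}\widetilde F(\vep_0)\in\CC$, and $J(f)=\widetilde F-c$ again lies in the ambient ring. The rapid decay of $J(f)$ at $-\infty$ follows from $|J(f)(\vep)|\leq\int_{-\infty}^\vep|f(t)|\,dt\leq C_N|\vep|^{-(N-1)}/(N-1)$ for arbitrary $N>1$.

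For the Rota-Baxter identity, I would invoke ordinary integration by parts, mirroring Example~\mref{ex:int}. Setting $F:=J(f)$ and $G:=J(g)$ with $F'=f$, $G'=g$ and $F(-\infty)=G(-\infty)=0$, integrating $(FG)'=fG+Fg$ from $-\infty$ to $\vep$ gives $F(\vep)G(\vep)=J(fG)(\vep)+J(Fg)(\vep)$, which is precisely (\mref{eq:Ba}) with $\lambda=0$. The products $fG$ and $Fg$ lie in $C^{Log}_S(-\infty,0)$ by the closure established in the first task, so the right-hand side is defined. The main obstacle is the middle part of well-definedness: showing that the natural antiderivative of $f$ lives in $\CC\{\{\vep,\vep^{-1}\}\}[\ln(-\vep)]$, and that the constant normalizing it to vanish at $-\infty$ is a genuine element of $\CC$ rather than a logarithmic divergence. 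Both issues are handled by the explicit parts-reduction for monomials together with the Cauchy criterion coming from the rapid decay of $f$ at $-\infty$.
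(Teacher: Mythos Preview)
Your proposal is correct and follows essentially the same three-step skeleton as the paper: closure under multiplication via the intersection of two multiplicatively closed sets (the ambient ring and the rapid-decay functions), well-definedness of $J$ by combining closure of the ambient ring under antidifferentiation with the rapid-decay hypothesis to normalize at $-\infty$, and the weight-$0$ Rota-Baxter identity via integration by parts exactly as in Example~\mref{ex:int}. The only difference is one of packaging: the paper delegates the assertion that $\CC\{\{\vep,\vep^{-1}\}\}[\ln(-\vep)]$ is closed under indefinite integration to \cite[Lemma~3.2]{GZ}, whereas you sketch that argument directly via integration by parts on the log-power. Your ``monomials $\vep^j\ln(-\vep)^k$'' phrasing is slightly loose---the coefficients of the $\ln(-\vep)^k$ are convergent Laurent series, not single powers $\vep^j$---but the reduction you describe (split off the $\vep^{-1}$ coefficient, integrate it to $\ln(-\vep)^{k+1}/(k+1)$, and handle the remainder by parts to lower the log-degree) is exactly the content of that cited lemma, so there is no genuine gap.
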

\begin {proof}
Let $Y$ be the set of functions on $(-\infty,0)$ such that, for every $n\in \NN$, we have $\lim\limits_{\vep\to -\infty} \vep ^nf(\vep)= 0$. Then $C^{Log}_S(-\infty,0)=\CC\{\{\vep,\vep^{-1}\}[\ln(-\vep)]\cap Y$. $Y$ is obviously closed under function multiplication. Since $\CC[\ln(-\vep)]\{\{\vep,\vep^{-1}\}$ is also closed under function multiplication, so is $C^{Log}_S(-\infty,0)$.

By Lemma~3.2 of~\mcite{GZ}, the set $\CC\{\{\vep,\vep^{-1}\}[\ln(-\vep)]$ is closed under indefinite integral. The
condition of a function $f(\vep)$ in $C^{Log}_S(-\infty,0)$ at $-\infty$ ensures that an indefinite integral of $f$ can be evaluated at $-\infty$. Thus $\int_{-\infty}^\vep f(t)dt$ is well-defined and is still in $C^{Log}_S(-\infty,0)$. The operator is a Rota-Baxter operator of weight 0 because of
the integration by parts formula of integration operators. See
Example~\mref{ex:int}.
\end {proof}

We consider a special element
\begin{equation}
\frac{e^{\vep}}{1-e^{\vep}}= -\frac{1}{\vep} +\sum_{i=0}^\infty \zeta(-i) \frac{\vep^i}{i!}.
\mlabel{eq:gen0}
\end{equation}
It is in $C^{Log}_S(-\infty,0)$ since
$\lim\limits_{\vep\to -\infty}\vep^n e^{\vep}=0$ for $n\in \NN$.
Our interest in this element comes from the expansion
$$ \frac{e^{\vep}}{1-e^{\vep}} = \sum_{n=1}^\infty e^{n\vep}$$ which can be viewed as the regularization of the formal special value
$\zeta(0):=\sum\limits_{n=1}^\infty \frac{1}{n^0}$
of $\zeta(s)$ at $s=0$. See Section~\mref{sss:ren} for further details.

By Theorem \mref {thm:hfree} we have

\begin{prop} There is a unique nonunitary Rota-Baxter algebra homomorphism
$$\phi : \calh_{\ge 0}\qnonu\to C^{Log}_S(-\infty,0)$$
such that
$$\phi ([0])=\frac {e^ \vep }{1-e^\vep}.
$$
\end{prop}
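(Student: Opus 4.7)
The plan is to apply directly the universal property of $\calh_{\geq 0}\qnonu$ as a free commutative nonunitary Rota-Baxter algebra of weight $0$ on one generator, as established in Theorem~\mref{thm:hfree}. According to that theorem, specifying a nonunitary Rota-Baxter algebra homomorphism out of $\calh_{\geq 0}\qnonu$ is equivalent to specifying the image of the generator $\lrb{0}$ in any target commutative nonunitary Rota-Baxter algebra of weight $0$. Thus the proposition reduces to checking that the triple $\bigl(C^{Log}_S(-\infty,0),\ \text{pointwise multiplication},\ J\bigr)$ forms such an algebra and that $\frac{e^\vep}{1-e^\vep}$ is a legitimate element of it.

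For the first point, Lemma~\mref{lem:jop} already supplies everything: $C^{Log}_S(-\infty,0)$ is closed under multiplication, and $J$ defined by Eq.~(\mref{eq:logint}) is a Rota-Baxter operator of weight $0$. Commutativity is automatic since we work with scalar-valued functions under pointwise multiplication. The algebra is genuinely nonunitary because the constant function $1$ fails the decay condition $\lim_{\vep\to -\infty}\vep^n f(\vep)=0$ at $n=0$, hence does not lie in $C^{Log}_S(-\infty,0)$.

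For the second point, the expansion in Eq.~(\mref{eq:gen0}) shows that $\frac{e^\vep}{1-e^\vep}\in \CC\{\{\vep,\vep^{-1}\}\}\subset \CC\{\{\vep,\vep^{-1}\}\}[\ln(-\vep)]$, while the elementary estimate $\lim_{\vep\to-\infty}\vep^n e^\vep=0$ for every $n\in\NN$ verifies the decay condition at $-\infty$ (note that the $\vep^{-1}$ pole of $\frac{e^\vep}{1-e^\vep}$ at $0$ is irrelevant since the condition is imposed only at $-\infty$). Hence $\frac{e^\vep}{1-e^\vep}\in C^{Log}_S(-\infty,0)$.

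With both inputs in hand, Theorem~\mref{thm:hfree} immediately produces the unique nonunitary Rota-Baxter algebra homomorphism $\phi:\calh_{\geq 0}\qnonu \to C^{Log}_S(-\infty,0)$ with $\phi(\lrb{0})=\frac{e^\vep}{1-e^\vep}$. There is no real obstacle here: all the substantive work has already been carried out in Theorem~\mref{thm:hfree} and Lemma~\mref{lem:jop}, and the proposition is essentially a formal consequence. The only bookkeeping worth flagging is the matching of weights, namely that both the Rota-Baxter operator $J$ and the free object $\calh_{\geq 0}\qnonu$ carry weight $0$, which is indeed the case.
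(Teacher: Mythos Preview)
Your proposal is correct and follows exactly the paper's approach: the paper's proof is the single line ``By Theorem~\mref{thm:hfree} we have\ldots'', with the verification that $(C^{Log}_S(-\infty,0),J)$ is a weight-$0$ Rota-Baxter algebra and that $\frac{e^\vep}{1-e^\vep}$ lies in it placed in the text immediately preceding the proposition (Lemma~\mref{lem:jop} and Eq.~(\mref{eq:gen0})). You have simply made these preliminary checks explicit within the proof itself.
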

In order to relate $\phi$ to multiple polylogarithms, we will need another property of Rota-Baxter algebras.
\begin{lemma}
Let $(\sha(x\bfk[x])^0,P_x)$ be the free commutative nonunitary Rota-Baxter algebra in Eq.~(\mref{eq:freenrb}). Let $(R,P)$ be a commutative nonunitary Rota-Baxter algebra. Let $f:\sha(x\bfk[x])^0\to R$ be a linear map such that
\begin{enumerate}
\item
$f(xy)=f(x)f(y), \forall y\in \sha(x\bfk[x])$;
\mlabel{it:mult}
\item
$f(P_x(y))=P(f(y)), \forall y\in \sha(x\bfk[x]).$
\mlabel{it:op}
\end{enumerate}
Then $f$ is a homomorphism of nonunitary Rota-Baxter algebras.
\mlabel{lem:indrb}
\end{lemma}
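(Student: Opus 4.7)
The plan is to establish $f(uv) = f(u)f(v)$ for all $u, v$ in the canonical basis $\{x^{n_1} \otimes \cdots \otimes x^{n_k} : k \geq 1,\ n_k \geq 1,\ n_i \geq 0\}$ of $\sha(x\bfk[x])^0$, whence linearity of $f$ gives the same for arbitrary $u, v$; combined with hypothesis~(b), this yields that $f$ is a Rota-Baxter algebra homomorphism. The induction is on the grading sum $\deg(u) + \deg(v)$, where $\deg(x^{n_1} \otimes \cdots \otimes x^{n_k}) = n_1 + \cdots + n_k + k - 1$ is the grading from the proof of Theorem~\mref{thm:free}. The key structural observation, implicit in Eq.~(\mref{eq:map1}), is that every basis element $w$ other than $x$ admits one of two elementary reductions: $w = x \cdot w''$ with $w''$ a basis element of degree $\deg(w)-1$ (when the leading exponent $n_1 \geq 1$), or $w = P_x(w')$ with $w'$ a basis element of degree $\deg(w)-1$ (when $n_1 = 0$, which forces $k \geq 2$).

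The base case $\deg(u) + \deg(v) = 2$ forces $u = v = x$, and then $f(uv) = f(x \cdot x) = f(x)f(x)$ is immediate from~(a). For the inductive step, by commutativity assume $\deg(v) \geq 2$. If $v = x \cdot v''$, then $uv = x \cdot (u v'')$ by associativity and commutativity, so~(a) gives $f(uv) = f(x) f(u v'')$. Since $(u, v'')$ has strictly smaller degree sum, the inductive hypothesis yields $f(u v'') = f(u) f(v'')$, and then~(a) in reverse assembles $f(x) f(v'') = f(x v'') = f(v)$, giving $f(uv) = f(u)f(v)$. If $v = P_x(v')$ but $u$ still factors as $x \cdot u''$ (or $u = x$), the symmetric argument with the roles of $u$ and $v$ interchanged applies.

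The main obstacle is the remaining case where both $u = P_x(u')$ and $v = P_x(v')$. Here I will invoke the Rota-Baxter identity (weight $0$) inside $\sha(x\bfk[x])^0$, namely
\[
  uv \;=\; P_x(u') P_x(v') \;=\; P_x(u' P_x(v')) + P_x(P_x(u') v') \;=\; P_x(u' v) + P_x(u v'),
\]
and apply $f$ together with~(b) to obtain $f(uv) = P(f(u' v)) + P(f(u v'))$. Both $(u', v)$ and $(u, v')$ have degree sum strictly smaller than that of $(u, v)$, so the inductive hypothesis reduces this to $P(f(u')f(v)) + P(f(u)f(v'))$. Substituting $f(v) = P(f(v'))$ and $f(u) = P(f(u'))$ from~(b), and then applying the Rota-Baxter identity in $R$ in reverse to $P(f(u'))\,P(f(v'))$, the sum collapses to $f(u)f(v)$. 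The essence of this step is that the RB identities in the free domain and in $R$ match term-for-term under the correspondence $P_x \leftrightarrow P$ supplied by~(b), so that the two $P_x$-terms produced in $\sha(x\bfk[x])^0$ reassemble, after passing through $f$, into the product $P(f(u'))\,P(f(v')) = f(u)f(v)$ in $R$.
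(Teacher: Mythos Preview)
Your proof is correct, but it takes a different route from the paper's. The paper argues indirectly via the universal property: since $\sha(x\bfk[x])^0$ is free on $x$, there is a unique Rota-Baxter homomorphism $g$ with $g(x)=f(x)$, and one then checks $f=g$ on the basis $x^{n_0}\otimes\cdots\otimes x^{n_k}$ by induction on the number $k$ of tensor factors, using only (a), (b), and the fact that $g$ is already known to be multiplicative. No appeal to the Rota-Baxter identity in $R$ is needed, because $g$ absorbs all the algebraic work. Your argument, by contrast, verifies $f(uv)=f(u)f(v)$ directly by induction on $\deg u+\deg v$, and in the case where both $u$ and $v$ lie in the image of $P_x$ you must invoke the weight-$0$ Rota-Baxter identity in both the domain and the target and match them through (b). This is more hands-on but self-contained: it does not presuppose the freeness statement of Theorem~\mref{thm:free}, whereas the paper's proof does. (A minor point: the grading you cite is introduced in the proof of Theorem~\mref{thm:free1}, not Theorem~\mref{thm:free}.)
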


\begin{proof}
Let the linear map $f:\sha(x\bfk[x])^0 \to R$ with the properties in the lemma be given. By the universal property of $\sha(x\bfk[x])^0$ as the free commutative nonunitary Rota-Baxter algebra generated by $x$, there is a unique nonunitary Rota-Baxter algebra $g:\sha(x\bfk[x])^0 \to R$ such that $g(x)=f(x)$. We just need to show that $g=f$.

Since $\sha(x\bfk[x])^0$ is additively spanned by the pure tensors
$\calx:=x^{n_0}\ot x^{n_1}\ot \cdots \ot x^{n_k}, n_i\geq 0, 0\leq i\leq k, n_k\geq 1, k\geq 0$, we just need to show that $f$ and $g$ agree on these pure tensors. We prove this claim by induction on $k\geq 0$. When $k=0$, we have $\calx=x^{n_0}$. By condition~(\mref{it:mult}) and an induction on $n_0\geq 1$, we obtain
\begin{equation}
f(x^{n_0}y)=f(x)^{n_0}f(y), \quad \forall y\in \sha(x\bfk[x])^0.
 \mlabel{eq:mult2}
\end{equation}
In particular we have $f(x^{n_0})=f(x)^{n_0}=g(x)^{n_0}$. Assume the claim has been proved for $k=r\geq 0$ and consider $x^{n_0}\ot x^{n_1}\ot \cdots \ot x^{n_{r+1}}$. Then by Eq.~(\mref{eq:mult2}) and the induction hypothesis, we obtain
\begin{eqnarray*}
f(x^{n_0} \ot x^{n_1}\ot \cdots \ot x^{n_{r+1}})&=&
f(x^{n_0} P_x(x^{n_1}\ot \cdots \ot x^{n_{r+1}}))\\
&=& f(x)^{n_0} f(P_x(x^{n_1}\ot \cdots \ot x^{n_{r+1}})) \\
&=& f(x)^{n_0} P(f(x^{n_1}\ot \cdots \ot x^{n_{r+1}}))\\
&=& f(x)^{n_0} P(g(x^{n_1}\ot \cdots \ot x^{n_{r+1}})) \\
&=& g(x)^{n_0} g(P_x(x^{n_1}\ot \cdots \ot x^{n_{r+1}}))\\
&=& g(x^{n_0} \ot x^{n_1} \ot \cdots \ot x^{n_{r+1}}).
\end{eqnarray*}
This completes the induction.
\end{proof}

Now for $\vec{s}=(s_1,\cdots,s_k)\in \ZZ^k$, consider the {\bf polylogarithm}
\begin{equation}
Li_{\vec s}(z):= \sum_{n_1>\cdots>n_k\geq 1}
\frac{z^{n_1}}{n_1^{s_1}\cdots n_k^{s_k}}, \mlabel{eq:polylog}
\end{equation}
which is convergent for $z\in \CC$ with $|z| <1$.

\begin{theorem}
\begin{enumerate}
\item For $\vec{s}=(s_1,\cdots,s_k)\in \ZZ_{\geq 0}^k$, the function $Li_{\vec{s}}(e^{\vep})$ is in $C^{Log}_S(-\infty,0)$.
\mlabel{it:polylog}
\item
The linear map
\begin{equation}
\frakL: \calh_{\geq 0}\qnonu \la C^{Log}_S(-\infty,0),
\quad \vec{s}\mapsto Li_{\vec{s}}(e^{\vep})
\mlabel{eq:polydef}
\end{equation}
defines a Rota-Baxter algebra homomorphism.
\mlabel{it:poly}
\item
For any $\vec{s}\in \calh_{\geq 0}\qnonu$ we have
$Li_{\vec{s}}(e^{\vep}) = \phi([\vec{s}])(\vep).$
\mlabel{it:polyphi}
\end{enumerate}
\mlabel{thm:polyphi}
\end{theorem}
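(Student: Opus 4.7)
My plan is to prove parts (1), (2), (3) in order, with the argument hinging on two recursive identities for $Li_{\vec{s}}(e^\vep)$ that simultaneously drive the induction in (1), verify the hypotheses of Lemma~\mref{lem:indrb} for (2), and enable a universal-property argument for (3). The first identity, for tuples beginning with $0$, is
\[
Li_{[0,\vec{s}']}(e^\vep) \;=\; \frac{e^\vep}{1-e^\vep}\,Li_{\vec{s}'}(e^\vep), \qquad \text{(R1)}
\]
derived by interchanging the summation order in Eq.~(\mref{eq:polylog}) and evaluating the geometric series $\sum_{n_1>n_2}z^{n_1}=z^{n_2+1}/(1-z)$. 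The second identity, for tuples beginning with $s_1\ge 1$, is
\[
Li_{[s_1,\vec{s}']}(e^\vep) \;=\; J\bigl(Li_{[s_1-1,\vec{s}']}(e^\vep)\bigr), \qquad \text{(R2)}
\]
obtained from $\tfrac{d}{dz}Li_{[s_1,\vec{s}']}(z)=\tfrac{1}{z}Li_{[s_1-1,\vec{s}']}(z)$ followed by the change of variables $z=e^\vep$, $t=e^u$, under which the lower limit $t=0$ becomes $u=-\infty$, matching the definition of $J$ in Eq.~(\mref{eq:logint}).

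For part (1), I would induct on the total degree $d(\vec{s})=s_1+\cdots+s_k+k$. The base case $\vec{s}=[0]$ is immediate from Eq.~(\mref{eq:gen0}), together with the fact that $e^\vep$ decays faster than any polynomial as $\vep\to-\infty$. In the inductive step, either $s_1=0$, in which case (R1) and the closure of $C^{Log}_S(-\infty,0)$ under multiplication (Lemma~\mref{lem:jop}) reduce the claim to a tuple of strictly smaller degree, or $s_1\ge 1$, in which case (R2) and the closure under $J$ (Lemma~\mref{lem:jop} again) do the same.

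For part (2), I would compose $\frakL$ with the Rota-Baxter algebra isomorphism $\sha(x\bfk[x])^0 \to \calh_{\ge 0}\qnonu$, $x\mapsto [0]$, constructed in the proof of Theorem~\mref{thm:hfree}, and apply Lemma~\mref{lem:indrb} to the composite. Since $[0]\qsshab \vec{s}=[0,\vec{s}]$ by Theorem~\mref{thm:shbax}, the multiplicativity hypothesis of Lemma~\mref{lem:indrb} reduces to $\frakL([0,\vec{s}])=\frakL([0])\cdot\frakL(\vec{s})$, which is exactly (R1); the operator-compatibility hypothesis reduces to $\frakL(I(\vec{s}))=J(\frakL(\vec{s}))$, which is exactly (R2). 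Lemma~\mref{lem:indrb} then yields that the composite, and hence $\frakL$ itself, is a nonunitary Rota-Baxter algebra homomorphism.

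For part (3), both $\frakL$ and $\phi$ are now nonunitary Rota-Baxter algebra homomorphisms from $\calh_{\ge 0}\qnonu$ into $C^{Log}_S(-\infty,0)$ that agree on the single generator: $\frakL([0])=Li_0(e^\vep)=e^\vep/(1-e^\vep)=\phi([0])$ by Eq.~(\mref{eq:gen0}). The freeness in Theorem~\mref{thm:hfree} then forces $\frakL=\phi$ on all of $\calh_{\ge 0}\qnonu$. The step I expect to require the most care is the verification of (R2): although the iterated-integral representation of multiple polylogarithms is classical, one must justify the change of variables from the path integral over $(0,z)$ to the integral $\int_{-\infty}^\vep$ defining $J$, and in particular verify convergence at $-\infty$, which follows from the exponential decay $|Li_{\vec{s}}(e^u)|=O(e^u)$ as $u\to-\infty$ (valid since $s_i\ge 0$). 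Once (R1) and (R2) are secured, the remainder is bookkeeping through the universal property of the free Rota-Baxter algebra.
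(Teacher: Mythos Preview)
Your proposal is correct and follows essentially the same approach as the paper: the paper isolates the same two identities (your (R1) and (R2), its Eqs.~(\mref{eq:poly1}) and (\mref{eq:polyJ})), uses them to prove membership in $C^{Log}_S(-\infty,0)$ by induction, then invokes Lemma~\mref{lem:indrb} for part~(\mref{it:poly}) and the universal property of the free Rota-Baxter algebra for part~(\mref{it:polyphi}). The only cosmetic difference is that the paper inducts on the length $k$ of $\vec{s}$ in part~(\mref{it:polylog}) (first applying $J^{s_1}$, then splitting off a factor of $Li_{[0]}$), whereas you induct on the total degree $s_1+\cdots+s_k+k$; the two inductions unwind the same recursion.
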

\begin {proof}
(\mref{it:polylog}).
We prove by induction on $k\geq 1$ with the help of the following two properties.
\begin{equation}
J(Li_{\vec{s}}(e^{\vep}))=\sum_{n_1>\cdots>n_k\geq 1}
\frac{\int _{-\infty}^\vep e^{n_1 t}d t}{n_1^{s_1}\cdots
n_k^{s_k}}= \sum_{n_1>\cdots>n_k\geq 1}
\frac{e^{n_1\vep}}{n_1^{s_1+1}\cdots n_k^{s_k}}=Li_{\vec{s}+\vec{e}_1}(e^{\vep}),
\mlabel{eq:polyJ}
\end{equation}

\begin{equation}
Li_{[0,\vec s]}(\vep)=\sum_{n_0>n_1>\cdots>n_k\geq 1}
\frac{e^{n_0 \vep}}{n_1^{s_1}\cdots n_k^{s_k}} =\frac {e^ \vep
}{1-e^\vep}\sum_{n_1>\cdots>n_k\geq 1} \frac{e^{n_1
\vep}}{n_1^{s_1}\cdots n_k^{s_k}}=
Li_{[0]}(e^{\vep}) Li_{\vec{s}}(e^{\vep}).
\mlabel{eq:poly1}
\end{equation}
When $k=1$, we have $Li_{(s_1)}(e^{\vep})=J^{s_1}(Li_{(0)}(e^{\vep}))$ which is in $C^{Log}_S(-\infty,0)$ by $Li_{(0)}(e^{\vep})\in C^{Log}_S(-\infty,0)$ and Eq.~(\mref{eq:polyJ}).
Suppose the statement has been proved for $k=r\geq 1$ and consider $\vec{s}=(s_1,\cdots,s_{r+1})$ in $\ZZ_{\geq 0}^{r+1}$. Then we have
$$Li_{\vec{s}}(e^{\vep}) =J^{s_1}\big(Li_{(0,s_2,\cdots,s_{r+1})}(e^{\vep})\big)
=J^{s_1}\big(Li_{[0]}(e^{\vep}) Li_{(s_2,\cdots,s_{r+1})}(e^{\vep})\big)$$
by Eq.~(\mref{eq:poly1}). This is in $C^{Log}_S(-\infty,0)$ by Lemma~\mref{lem:jop}, the induction hypothesis and Eq.~(\mref{eq:polyJ}). This completes the induction.

\smallskip

\noindent
(\mref{it:poly}). Let $I$ be the operator $[\vec s]\mapsto [\vec s+\vec
e_1]$ on $\calh_{\ge 0}\qnonu$ defined in Eq.~(\mref{eq:e1shift}) and $J$ be the Rota-Baxter operator on $C^{Log}_S(-\infty,0)$ defined in Eq.~(\mref{eq:logint}). Then we have
\begin{eqnarray*}
(\frakL\circ I)(\vec s)(\vep)&=&Li_{I(\vec s)}(\vep)\\
&=& \sum_{n_1>\cdots>n_k\geq 1}
\frac{e^{n_1\vep}}{n_1^{s_1+1}\cdots n_k^{s_k}} \\
&=&\sum_{n_1>\cdots>n_k\geq 1}
\frac{\int _{-\infty}^\vep e^{n_1\vep}d \vep}{n_1^{s_1}\cdots
n_k^{s_k}}\\
&=& J(\sum_{n_1>\cdots>n_k\geq 1}
\frac{e^{n_1\vep}}{n_1^{s_1}\cdots n_k^{s_k}}) \\
&=&(J\circ \frakL)(\vec s)(\vep).
\end{eqnarray*}
So $\frakL$ commutes with Rota-Baxter operators.

Also by Eq.~(\mref{eq:poly1}), we have
$$Li_{[0]\qsshab \vec s}(e^{\vep})=Li_{(0,\vec{s})}(e^{\vep})=Li_{[0]}(e^{\vep})Li_{\vec s}(e^{\vep}), \quad \forall \vec{s}\in \calh_{\geq 0}\qnonu.
$$
So $\frakL([0]\qsshab\vec s)=\frakL([0])\frakL([\vec{s}])$. Therefore by Lemma~\mref{lem:indrb}, $\frakL$ is a
homomorphism of nonunitary Rota-Baxter algebras.

\smallskip

\noindent
(\mref{it:polyphi}). By the universal property of the free commutative nonunitary Rota-Baxter algebra $\sha(x\bfk[x])^0$, the map $\frakL:\sha(x\bfk[x])^0\to C^{Log}_S(-\infty,0)$ is in fact the unique Rota-Baxter algebra homomorphism from $\sha(x\bfk[x])^0$ such that
$\frakL([0])= \frac{e^{\vep}}{1-e^{\vep}}$. Since $\phi$ also satisfies this property, we have $\frakL= \phi$. This is what we need.
\end{proof}

So by viewing $\calh_{\ge 0}\qnonu$ as a free nonunitary Rota-Baxter algebra
and choosing a suitable value for $[0]$, we obtained all the multiple polylogarithms as regularized MZVs with the shuffle product. By choosing other values for $[0]$, we can obtain other regularized MZVs with the shuffle product. In general, these regularized MZVs have
poles. For example $Li_0(\vep)=\frac {e^ \vep }{1-e^\vep}$ has an
order 1 pole at $\vep=0$. So in general, we can not take $\vep \to
0$.

\subsection{Extended shuffle relation}
\mlabel{ss:sh1} Having obtained regularized MZVs with shuffle product from the free Rota-Baxter algebra on $\calh_{\ge 0}\qnonu$, let us restrict our attention to $\calh_{\ge 1}\qnonu$.

The restriction of the Rota-Baxter algebra homomorphism $\phi:\calh_{\geq 0}\qnonu \to C^{Log}_S(-\infty,0)$ to $\calh_{\geq 1}\qnonu$ gives a Rota-Baxter algebra homomorphism
\begin{equation}
\phi: \calh_{\geq 1}\qnonu \to C^{Log}_S(-\infty,0).
\mlabel{eq:phi1}
\end{equation}
Since $\calh_{\geq 1}\qnonu$ is the free commutative nonunitary Rota-Baxter algebra generated by $[1]$ by Theorem~\mref{thm:free1q}, this $\phi$ is the unique Rota-Baxter algebra homomorphism such that
\begin{eqnarray*}
\phi(x_1)&=&\phi(I([0]))\\
&=&J(\phi([0]))\\
&=& J(\frac{e^{\vep}}{1-e^{\vep}}) \\
&=&-\ln(-\vep)+\sum _{i\ge 1} \zeta(-i+1)\frac {\vep ^i}{i!}.
\end{eqnarray*}

Since $\phi([1])$ is in $\CC\{\{\vep\}\}[\ln(-\vep)]$ which is closed under multiplication and taking antiderivatives, and $\phi(\calh_{\geq 1}\qnonu)$ is a Rota-Baxter algebra generated by $\phi([1])$, it follows that $\phi(\calh_{\geq 1}\qnonu)$ is contained in $\CC[[\vep]][\ln(-\vep)]$, the polynomial algebra over the formal Laurent series.
This can also be seen by viewing the images of $\phi=\frakL$ as multiple polylogarithms $Li_{\vec{s}}(e^{\vep})$.
Because $\ln
(-\vep) $ is transcendental over $\CC \{\{\vep\}\}$, we have the
embedding
$$ u: \CC\{\{\vep\}\}[\ln(-\vep)] \cong \CC\{\{\vep\}\}[T] \hookrightarrow \CC[[\vep]][T]\hookrightarrow \CC[T][[\vep]]$$
by sending $-\ln(-\vep)$ to $T$.
Pre-composing $\phi$ with $\eta: \calh\shf\lone\nonu\to \calh_{\geq 1}\qnonu$ in Eq.~(\mref{eq:shqsh}) and postcomposing $\phi$ with $u$ and then
the evaluation map $\vep\to 0$, we obtain an algebra homomorphism
\begin{equation}
 Z^{RB}: \calh\shf\lone\nonu \ola{\eta} \calh_{\geq 1}\qnonu \ola{\phi} \CC\{\{\vep\}\}[\lne] \ola{u} \CC[T][[\vep]] \ola{\vep\mapsto 0} \CC[T]
\mlabel{eq:rbmap}
\end{equation}
which we can extend to $\calh\shf\lone$ by unitarization.

We next compare $Z^{RB}$ with the extended shuffle
relation of MZVs~\mcite{IKZ}. We first recall some more notations.

As is well-known, an MZV has an integral representation~\mcite{LM}
\begin{equation}
\zeta(s_1,\cdots,s_k)=
 \int_0^1 \int_0^{t_1}\cdots \int_0^{t_{|\vec{s}|-1}} \frac{dt_1}{f_1(t_1)}
 \cdots \frac{dt_{|\vec{s}|}}{f_{|\vec{s}|}(t_{|\vec{s}|})}
 \mlabel{eq:intrep}
\end{equation}
Here $|\vec{s}|=s_1+\cdots +s_k$ and
$$f_j(t)=\left\{\begin{array}{ll} 1-t_j, & j= s_1,s_1+s_2,\cdots, s_1+\cdots +s_k,\\
t_j, & \text{otherwise}. \end{array} \right .
$$
Since the integral operator is the Rota-Baxter operator of weight
zero in Example~\mref{ex:int}, it is expected that the
multiplication of two MZVs is given by the shuffle product that
defines the product in a free commutative Rota-Baxter algebra of
weight 0. This is indeed the case. Let
$$
\mzvalg\qnonu: = \QQ \{ \zeta(s_1,\cdots,s_k)\ |\ s_i\geq 1, s_1\geq 2\}
\subseteq \RR
$$
be the $\QQ$-subspace of $\RR$ spanned by MZVs and let $$\mzvalg=\QQ
+ \mzvalg\qnonu \subseteq \RR.$$ Then the shuffle product of MZVs is encoded by the algebra homomorphism~\mcite{Ho1,IKZ}
$$
 \zeta\shf: \calh\shf\shzero \to \mzvalg,
 \quad x_0^{  s_1-1}  x_1  \cdots   x_0^{s_k-1}  x_1 \mapsto \zeta(s_1,\cdots,s_k), \quad 1\mapsto 1.
$$
Furthermore, note that
$\calh\shf\lone\cong \calh\shf\shzero[y]$ where $y$ is a polynomial
variable. A canonical choice of $y$ is $x_1$. Thus the algebraic
homomorphisms $\zeta^{\ssha}$ extends uniquely to an algebraic
homomorphism
\begin{equation}
Z^{\ssha}:\calh\shf\lone \to \mzvalg [T]
\mlabel{eq:zsh}
\end{equation}
sending $y$ to $T$~\mcite{IKZ}. This is the extended shuffle
relation.

\begin{theorem}

We have $Z^{RB}=Z^{\ssha}$. In particular, the
restriction of $Z^{RB}$ to $\calh\shf\lzero$ agrees with
$\zeta^{\ssha}$.
\mlabel{thm:rbmzv}
\end{theorem}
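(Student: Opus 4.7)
The plan is to exploit the polynomial decomposition $\calh\shf\lone = \calh\shf\shzero[x_1]$ recalled just before the statement. Since both $Z^{RB}$ and $Z^{\ssha}$ are unital $\QQ$-algebra homomorphisms into $\CC[T]$ (with $\mzvalg \subset \RR \subset \CC$), the equality $Z^{RB}=Z^{\ssha}$ holds if and only if the two maps agree on the generator $x_1$ and on the convergent shuffle subalgebra $\calh\shf\shzero$, on which $Z^{\ssha}$ coincides with $\zeta^{\ssha}$ by construction. I would therefore verify these two conditions in turn.

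For $x_1$, by definition $Z^{\ssha}(x_1)=T$. On the $Z^{RB}$ side, $\eta(x_1)=[1]$ and the explicit formula
\[
\phi([1]) = -\lne + \sum_{i\ge 1} \zeta(1-i)\frac{\vep^i}{i!}
\]
computed immediately before the theorem shows, after applying $u$, that the image is $T + \sum_{i\ge 1}\zeta(1-i)\vep^i/i! \in \CC[T][[\vep]]$; evaluation $\vep\mapsto 0$ extracts the $\vep$-constant term $T$. Hence the two maps match on $x_1$.

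Next, for a convergent word $w = x_0^{s_1-1}x_1\cdots x_0^{s_k-1}x_1 \in \calh\shf\shzero$ with $s_1\ge 2$, Theorem~\mref{thm:polyphi}(c) identifies $\phi(\eta(w))=\phi([\vec s])$ with $Li_{\vec s}(e^\vep)$, which lies in $\CC[[\vep]][\lne]$. The transcendence of $\lne$ asserted in Lemma~\mref{lem:vep} yields a unique decomposition
\[
Li_{\vec s}(e^\vep) = \sum_{j\ge 0} g_j(\vep)\,(-\lne)^j, \qquad g_j(\vep)\in \CC[[\vep]],
\]
so that $Z^{RB}(w) = \sum_j g_j(0)\,T^j$. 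Since $s_1\ge 2$, Abel's theorem applied to the series defining $Li_{\vec s}$ gives $\lim_{\vep\to 0^-} Li_{\vec s}(e^\vep)=\zeta(\vec s)$; combined with $(-\lne)^j\to +\infty$ for $j\ge 1$ and $g_j(\vep)\to g_j(0)$, this forces $g_j(0)=0$ for every $j\ge 1$ and $g_0(0)=\zeta(\vec s)$. Therefore $Z^{RB}(w)=\zeta(\vec s)=\zeta^{\ssha}(w)=Z^{\ssha}(w)$, and the comparison on $\calh\shf\shzero$ is complete.

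The main obstacle is the passage from the formal expansion in $\CC[[\vep]][\lne]$ to the analytic limit as $\vep\to 0^-$: one needs the formal coefficient series $g_j(\vep)$ to come from genuine analytic functions near $\vep=0$, so that their constant terms coincide with the analytic values at $0$. A clean way to handle this is to derive the asymptotic expansion of $Li_{\vec s}(e^\vep)$ inductively from the iterated-integral representation~(\mref{eq:intrep}) and the identities $J(Li_{\vec s}(e^\vep))=Li_{\vec s+\vec e_1}(e^\vep)$ and $Li_{[0,\vec s]}(e^\vep)=Li_{[0]}(e^\vep)Li_{\vec s}(e^\vep)$ used in the proof of Theorem~\mref{thm:polyphi}, producing convergent $g_j(\vep)$ whose values at $0$ can be read off directly and matched against Abel's limit.
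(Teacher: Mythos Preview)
Your proposal is correct and follows essentially the same route as the paper: reduce via the polynomial decomposition $\calh\shf\lone\cong\calh\shf\shzero[x_1]$ to checking that both maps send $x_1$ to $T$ and agree with $\zeta^{\ssha}$ on convergent words, then invoke Theorem~\mref{thm:polyphi}.(\mref{it:polyphi}) to identify $\phi(\eta(w))$ with $Li_{\vec s}(e^\vep)$ and evaluate at $\vep=0$. The paper's own proof is two sentences long and states exactly this.

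Your final paragraph raises a concern that is already handled by the setup and need not be argued inductively. The paper has established (just before Eq.~(\mref{eq:rbmap})) that $\phi(\calh_{\geq 1}\qnonu)\subset\CC\{\{\vep\}\}[\lne]$, i.e.\ the coefficients $g_j(\vep)$ in your expansion are \emph{convergent} power series by construction; hence their formal constant terms coincide with their analytic values at $0$, and your Abel-limit argument goes through directly without the extra inductive derivation you sketch. In fact the argument simplifies: since $g_j\in\CC\{\{\vep\}\}$ and only finitely many $g_j$ are nonzero, boundedness of $Li_{\vec s}(e^\vep)$ as $\vep\to 0^-$ forces the leading $g_j(0)$ with $j\ge 1$ to vanish, and then the limit $\zeta(\vec s)$ reads off $g_0(0)$.
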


\begin{proof} For $\vec{s}=(s_1,\cdots,s_k)$ with
$s_1>1$ and $s_i\geq 1$, $1\leq i\leq k$, by Theorem~\mref{thm:polyphi}.(\mref{it:polyphi}) evaluated at $\vep=0$, we have
$Z^{RB}(\frakx_{\vec{s}})=\zeta^{\ssha}(\frakx_{\vec{s}})$. So
$Z^{RB}$ and $\zeta^{\ssha}$ agrees on $\calh\shf\lzero$. Then
the theorem follows since both $Z^{RB}$ and
$Z^{\ssha}$ are the unique extension of $\zeta^{\ssha}:
\calh\shf\lzero \to \CC$ by taking $z_1$ to $T$.
\end{proof}

\subsection {Extended double shuffle relations}
\mlabel{ss:dsh1} We have just derived the extended shuffle relation
$Z^{\ssha} $ of MZVs through the freeness of $\calh\shf\lone\nonu$.
In earlier papers~\mcite{GZ,GZ2,MP2} we have also studied the extended
stuffle (quasi-shuffle) relation $\zeta^*$ of MZVs by renormalization. By combing
these two together, we next derive the extended double shuffle
relations (EDS)~\mcite{IKZ}. To formulate the results, we first give
a summary of EDS and regularized MZVs.

\subsubsection{Extended double shuffle relations}
Since an MZV is defined as a nested sum in Eq.~(\mref{eq:mzv}) and the
summation operator is the Rota-Baxter operator of weight 1 in
Example~\mref{ex:sum}, the multiplication of two MZVs follow the quasi-shuffle product
(mixable shuffle product of weight 1) that defines the
multiplication in a free commutative Rota-Baxter algebra of weight 1.
More precisely, consider the semigroup
$$ G_{\geq 1}:=\{z_s:=[s]\ |\ s\in \ZZ_{\geq 1}\}$$
in Notation~\mref{no:mscase}. Then the usual quasi-shuffle algebra
for MZVs is $$\calh\qsh:=\calh\qsh_{\hspace{-.2cm}\QQ\, Z}=
\calh_{\geq 1}^\ast$$ which contains the subalgebra
$$
\calh\qsh\lzero:=\QQ \oplus \Big (\bigoplus_{s_i\geq 1, 1\leq i\leq k, s_1>1, k\geq 1} \QQ z_{s_1}
\cdots z_{s_k} \Big ).
$$
Then the stuffle (quasi-shuffle) product of MZVs is encoded by the algebra homomorphism~\mcite{Ho1,IKZ}
\begin{equation}
\zeta\qsh: \calh\qsh\lzero \to \mzvalg, \quad z_{s_1}\cdots z_{s_k}
\mapsto \zeta(s_1,\cdots,s_k), \quad 1\mapsto 1. \mlabel{eq:mzvst}
\end{equation}

The natural bijection of $\QQ$-vector spaces
$$
\shqs: \calh\shf\lone \to \calh\qsh, \quad x_0^{s_1-1}  x_1  \cdots
x_0^{s_k-1}  x_1
 \leftrightarrow z_{s_1, \cdots, s_k}, \quad 1\leftrightarrow 1.
$$
restricts to a bijection of vector spaces $\shqs:
\calh\shf\shzero \to \calh\qsh\lzero. $ Then the fact that the product of two MZVs can be expressed in two ways is encoded by the commutative diagram
\begin{equation}
\xymatrix{ \calh\qsh\lzero  \ar_{\zeta\qsh}[rd]&&
    \calh\shf\shzero \ar_{\shqs}[ll] \ar^{\zeta\shf}[ld] \\
& \mzvalg & } \mlabel{eq:diag1}
\end{equation}
Defining a product $\qssha$ on $\calh\qsh\lzero$ from $\ssha$ through $\eta$, the {\bf double shuffle relation} is the set
$$
\{ w_1\qssha w_2 - w_1 \ast w_2\ |\ w_1,w_2\in\calh\qsh\lzero\}.$$

Since
$\calh\qsh \cong \calh\qsh\lzero[y]$ where $y$ is a polynomial
variable, the algebraic homomorphism
$\zeta^\ast$ extends uniquely to an algebraic homomorphism~\mcite{IKZ}
\begin{equation}
Z^\ast: \calh\qsh \to \mzvalg[T]
\mlabel{eq:zqsh}
\end{equation}
sending $y$ to $T$. Define a
function $A(u)$ and its Taylor series expansion by
\begin{equation}
 A(u)=\exp \big( \sum_{n=2}^\infty \frac{(-1)^n}{n} \zeta(n) u^n\big)=\sum_{k=0}^\infty \gamma_k u^k, \gamma_k\in \RR
\mlabel{eq:A}
\end{equation}
and define a map $\rho:\RR[T] \to \RR[T]$ by
\begin{equation}
\rho(e^{Tu})=A(u)e^{Tu}.
\mlabel{eq:rho}
\end{equation}
%Equivalently,
%$$  \rho\left ( \frac{T^\ell}{\ell!} \right) = \sum_{k=0}^\infty \gamma_k \frac{T^{\ell-k}}{(\ell-k)!}, \quad \ell\geq 0.$$
Then the commutative diagram in Eq.~(\mref{eq:diag1}) extends to the
commutative diagram
\begin{equation}
\xymatrix{ \calh\qsh  \ar_{Z\qsh}[d]&&
    \calh\shf\lone \ar_{\shqs[y]}[ll] \ar^{Z\shf}[d] \\
\mzvalg[T] & & \mzvalg[T] \ar_{\rho}[ll] } \mlabel{eq:diag2}
\end{equation}
where $\eta[y]$ is extended from $\eta$ by sending $y$ to $y$.

The {\bf extended double shuffle relation}~\mcite{IKZ,Ra,Zud} is
\begin{equation}
\{ w_1\qssha w_2 - w_1 \ast w_2,\ z_1 \qssha w_2 - z_1 \ast w_2\ |\
w_1,w_2\in\calh\qsh\lzero\}. \mlabel{eq:eds}
\end{equation}
%The following result is well-known.
\begin{theorem} {\bf (\mcite{Ho1,IKZ,Ra})}
Let $I_\edsalg$ be the ideal of $\calh\qsh\lzero$ generated by the
extended double shuffle relation in Eq.~(\mref{eq:eds}). Then
$I_\edsalg$ is in the kernel of $\zeta\qsh$. \mlabel{thm:eds}
\end{theorem}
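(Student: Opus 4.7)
The plan is to use that $\zeta\qsh:(\calh\qsh\lzero,\ast)\to\mzvalg$ is an algebra homomorphism, so that it is enough to show $\zeta\qsh$ annihilates each generator of $I_\edsalg$. There are two families of generators in Eq.~(\mref{eq:eds}): the \emph{finite} relations $w_1\qssha w_2-w_1\ast w_2$ and the \emph{regularized} relations $z_1\qssha w_2-z_1\ast w_2$, with $w_1,w_2\in\calh\qsh\lzero$. For the latter, implicit in the statement is that the difference already lies in $\calh\qsh\lzero$; this is a standard check, verified combinatorially from the recursive definitions of $\ast$ and $\qssha$, in which the two divergent contributions starting with $z_1$ cancel exactly.

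The finite family is handled directly through Eq.~(\mref{eq:diag1}). Since $\qssha$ on $\calh\qsh\lzero$ is by construction the transport of $\ssha$ through $\shqs$, and since $\zeta\shf$ is multiplicative on $(\calh\shf\lzero,\ssha)$, one obtains
\[ \zeta\qsh(w_1\qssha w_2) = \zeta\shf(\shqs^{-1}(w_1)\ssha\shqs^{-1}(w_2)) = \zeta\shf(\shqs^{-1}(w_1))\,\zeta\shf(\shqs^{-1}(w_2)) = \zeta\qsh(w_1)\zeta\qsh(w_2) = \zeta\qsh(w_1\ast w_2). \]

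For the regularized family, it suffices to prove $Z^\ast(z_1\qssha w_2)=Z^\ast(z_1\ast w_2)$ in $\mzvalg[T]$, since $Z^\ast$ restricts to $\zeta\qsh$ on the subspace $\calh\qsh\lzero$ that contains the difference. The right hand side equals $T\cdot\zeta\qsh(w_2)$ by multiplicativity of $Z^\ast$ and $Z^\ast(z_1)=T$. For the left hand side, the identity $z_1\qssha w_2=\shqs[y](x_1\ssha\shqs^{-1}(w_2))$ combined with the commutativity of Eq.~(\mref{eq:diag2}) gives
\[ Z^\ast(z_1\qssha w_2)=\rho\bigl(Z^\shf(x_1\ssha\shqs^{-1}(w_2))\bigr)=\rho\bigl(Z^\shf(x_1)\cdot Z^\shf(\shqs^{-1}(w_2))\bigr)=\rho\bigl(T\cdot\zeta\qsh(w_2)\bigr). \]

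The theorem therefore reduces to the identity $\rho(cT)=cT$ for $c\in\mzvalg\subseteq\RR$. Since $\rho$ is $\RR$-linear by construction, this is equivalent to $\rho(T)=T$, and extracting the coefficient of $u^1$ from $\rho(e^{Tu})=A(u)e^{Tu}$ yields $\rho(T)=\gamma_0 T+\gamma_1$; by Eq.~(\mref{eq:A}) we have $\gamma_0=A(0)=1$ and $\gamma_1=0$, the latter precisely because the sum in the exponent of $A(u)$ begins at $n=2$, so that the divergent $\zeta(1)$ never enters. The main substantive input is thus Eq.~(\mref{eq:diag2}) (the existence of $\rho$ intertwining $Z^\shf$ and $Z^\ast$) together with the pairwise cancellation of divergences; granted these, the rest of the proof is a short diagram chase closed by the one-line computation $\rho(T)=T$.
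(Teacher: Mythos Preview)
The paper does not give its own proof of this theorem; it is quoted from the literature (Hoffman, Ihara--Kaneko--Zagier, Racinet), so there is no ``paper's proof'' to compare against. Your argument is correct and is essentially the derivation in \mcite{IKZ}: there the commutative diagram~(\mref{eq:diag2}) is established first (as their Theorem~1), and the extended double shuffle relations are then read off as a corollary, exactly via the diagram chase you describe. Your treatment of the finite family through diagram~(\mref{eq:diag1}) and of the regularized family through~(\mref{eq:diag2}), closed by the computation $\rho(T)=\gamma_0 T+\gamma_1=T$ from $A(u)=1+O(u^2)$, is sound; the observation that the single divergent term $z_1 z_{s_1}\cdots z_{s_k}$ occurs with coefficient~$1$ in both $z_1\qssha w_2$ and $z_1\ast w_2$ (because $s_1>1$) is what places the difference in $\calh\qsh\lzero$. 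The only caveat is logical rather than mathematical: you are invoking~(\mref{eq:diag2}) as input, and in the present paper that diagram is itself stated without proof as a citation from \mcite{IKZ}, so your argument inherits that dependence.
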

It is conjectured that $I_\edsalg$ is in fact the kernel of
$\zeta\qsh$. A consequence of this conjecture is the irrationality
of $\zeta(2n+1), n\geq 1$~\mcite{An}.

\subsubsection{Renormalized MZVs}
\mlabel{sss:ren} To extend the double shuffle relations to MZVs with
non-positive arguments, we have to make sense of the divergent sums
defining these MZVs. We give a summary of the renormalization
approach and refer the reader to other
references~\mcite{Gugn,GPXZ,GZ,GZ2} for
details.

Consider the abelian semigroup
\begin{equation}
\frakM= \{{{\wvec{s}{r}}}\ \big|\ (s,r)\in \ZZ \times \RR_{>0}\}
\end{equation}
with the multiplication
$$ {\wvec{s}{r}}\cdot {\wvec{s'}{r'}}={\wvec{s+s'}{r+r'}}.$$
With the notation in Section~\mref{ss:msh}, we define the quasi-shuffle algebra
algebra
$$\calh_{\frakM}\qsh:=\sh_{\CC,1}(\CC \frakM)$$
with the quasi-shuffle product $*$. For $w_i=\wvec{s_i}{r_i}\in
\frakM,\ i=1,\cdots,k$, we use the notations
$$ \vec{w}=(w_1, \dots,w_k)
=\wvec{s_1, \dots,s_k}{r_1, \dots,r_k}=\wvec{\vec s}{\vec r},\ {\rm
where\ } \vec s=(s_1, \dots,s_k), \vec r=(r_1, \dots,r_k).$$
For $\vec{w}=\wvec{\vec s}{\vec r}\in \frakM^k$ and $\vep\in \CC$
with ${\rm Re}(\vep)<0$, define the {\bf directional regularized
MZV}:
\begin{equation}
Z(\wvec{\vs}{\vr};\vep)=\sum_{n_1>\cdots>n_k>0}
\frac{e^{n_1\,r_1\vep} \cdots
    e^{n_k\,r_k\vep}}{n_1^{s_1}\cdots n_k^{s_k}}.
\label{eq:reggmzv}
\end{equation}
It converges for any $\wvec{\vs}{\vr}$ and is regarded as the
regularization of the {\bf formal MZV}
\begin{equation}
\zeta (\vs)= \sum_{n_1>\cdots>n_k>0} \frac{1}{n_1^{s_1} \cdots
    n_k^{s_k}}
\label{eq:formgmzv}
\end{equation}
which converges only when $s_i>0$ and $s_1>1$. Notice that
$$Z(\wvec{\vs}{\vec e_1};\vep)=Li_{\vec s}(e^\vep).
$$
%It is related to the multiple polylogarithm
%$${\rm Li}_{s_1, \dots,s_k}(z_1, \dots,z_k)=\sum_{n_1>\cdots > n_k>0} \frac{z_1^{n_1} \cdots z_k^{n_k}}{n_1^{s_1}\cdots n_k^{s_k}}$$
%by a change of variables $z_i=e^{r_i\vep}, 1\leq i\leq k$.

This regularization defines an algebra homomorphism~\mcite{GZ}:
\begin{equation}
\uni {Z}: \calh_\frakM \to \CC[T][[\vep,\vep^{-1}]. \label{eq:zmap}
\end{equation}

\subsubsection{Double shuffle of regularized MZVs}

We now derive the extended double shuffle relation from regularized
MZVs. We start with some preparational lemmas.
For $\ell\geq 1$, denote $\{1\}^\ell = \underbrace{1,\cdots,1}_{\ell-\text{terms}}$.

\begin {lemma} For $\vec s\in \ZZ ^k_{>0}$ with $s_1>1$,
$$ Li_{(\{1\}^{\ell},\vec{s})}(e^\vep)=\sum_{m_1>m_2>\cdots >m_\ell>n_1>\cdots>n_k\geq 1} \frac{e^{m_1\vep}}{m_1\cdots m_\ell n_1^{s_1}\cdots n_k^{s_k}} $$
is of order $(\lne )^\ell$, i.e. it is $a_\ell  (\lne )^\ell+a_{\ell-1}(\lne
)^{\ell-1}+\cdots +a_0+o(\vep)$, where $a_i$'s are constants.
\mlabel{lem:order1}
\end{lemma}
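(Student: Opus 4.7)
The plan is to proceed by induction on $\ell$, exploiting the shuffle algebra homomorphism $\frakL:\calh_{\geq 0}\qnonu\to C^{Log}_S(-\infty,0)$ established in Theorem~\mref{thm:polyphi}, which converts a product of multiple polylogarithms into a linear combination of multiple polylogarithms via the shuffle product $\qssha$. Since $C^{Log}_S(-\infty,0)\subset\CC\{\{\vep,\vep^{-1}\}\}[\lne]$ is a polynomial ring in the transcendental element $\lne$ (Lemma~\mref{lem:vep}), the notion ``of order $(\lne)^\ell$'' is a genuine degree-in-$\lne$ statement, stable under products.

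The base case $\ell=0$ is immediate: for $\vec s\in\ZZ_{>0}^k$ with $s_1>1$ the series defining $Li_{\vec s}(z)$ converges at $z=1$, so $Li_{\vec s}(e^\vep)$ is continuous at $\vep=0$ with value $\zeta(\vec s)$ and is therefore of order $(\lne)^0$.

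For the inductive step I would first establish the shuffle identity
\begin{equation*}
[1]\qssha[\{1\}^{\ell-1},\vec s] \;=\; \ell\,[\{1\}^\ell,\vec s] \;+\; \sum_j [\{1\}^{\ell-1},\vec t^{(j)}]
\end{equation*}
in $\calh_{\geq 1}\qnonu$, where each $\vec t^{(j)}$ has first coordinate strictly greater than $1$. Combinatorially this records the fact that, when the single letter $x_1$ is shuffled into $x_1^{\ell-1}x_0^{s_1-1}x_1\cdots x_0^{s_k-1}x_1$, exactly $\ell$ insertion positions fall among the leading $x_1$'s (producing $[\{1\}^\ell,\vec s]$), while every other insertion leaves the prefix $x_1^{\ell-1}$ intact but pushes a new $x_1$ after some $x_0$, giving an $(\ell)$-th coordinate of value $\geq 2$. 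I would verify it formally by a sub-induction on $\ell$ using the second and fourth cases of~(\mref{eq:shpr1}). Applying $\frakL$ and using multiplicativity then yields
\begin{equation*}
Li_1(e^\vep)\cdot Li_{(\{1\}^{\ell-1},\vec s)}(e^\vep) \;=\; \ell\cdot Li_{(\{1\}^\ell,\vec s)}(e^\vep) \;+\; \sum_j Li_{(\{1\}^{\ell-1},\vec t^{(j)})}(e^\vep).
\end{equation*}
A direct Taylor expansion gives $Li_1(e^\vep)=-\ln(1-e^\vep)=-\lne+O(\vep)$, which is of order $\lne$. By the induction hypothesis every term $Li_{(\{1\}^{\ell-1},\vec r)}(e^\vep)$ with $r_1>1$ is of order $(\lne)^{\ell-1}$. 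Hence the left-hand side of the displayed identity is of order $(\lne)^\ell$ while the non-leading terms on the right are of order $(\lne)^{\ell-1}$, and dividing by $\ell$ forces $Li_{(\{1\}^\ell,\vec s)}(e^\vep)$ itself to be of order $(\lne)^\ell$, completing the induction.

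The main obstacle is the combinatorial shuffle identity above; once it is in hand, the analytic step amounts to reading off leading terms in a polynomial ring in $\lne$, which is routine. A small secondary point to keep straight is that the sub-induction must track \emph{both} the length $\ell$ of the leading block of $1$'s and the combinatorial structure of the tail $\vec s$, so that the ``other'' summands all genuinely have their leading $x_1$-run of length exactly $\ell-1$; this is ensured because any insertion not within the initial block of $x_1$'s necessarily places an $x_0$ between the $(\ell{-}1)$-th $x_1$ and the inserted $x_1$.
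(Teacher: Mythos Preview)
Your proposal is correct and follows essentially the same approach as the paper: induction on $\ell$, with the inductive step driven by the shuffle identity $[1]\qssha[\{1\}^{\ell-1},\vec s]=\ell\,[\{1\}^\ell,\vec s]+[\{1\}^{\ell-1},([1]\qssha(\vec s-\vec e_1))+\vec e_1]$, then applying the multiplicative homomorphism $\frakL$ and the expansion of $Li_{[1]}(e^\vep)$. The only cosmetic difference is that the paper writes the remainder term explicitly as $x_1^{\ell-1}x_0(x_1\ssha\frakx_{\vec s-\vec e_1})$ rather than as an abstract sum $\sum_j[\{1\}^{\ell-1},\vec t^{(j)}]$.
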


\begin {proof} We prove this lemma by induction on $\ell$. When $\ell=0$ it is obvious because of the convergency of $\zeta (\vec s)$ which gives $Li_{\vec{s}}(e^\vep)=\zeta (\vec s)+o(\vep)$. Assume that the lemma has been proved for the case when $\ell=a\geq 0$. The shuffle relation in $\calh\shf\lone$
$$ x_1\ssha x_1^a\frakx_{\vec{s}} = (a+1)x_1^{a+1}\frakx_{\vec{s}} + x_1^ax_0(x_1\ssha \frakx_{\vec{s}-\vec{e}_1})$$
translates to the relation in $(\calh_{\geq 1}\qnonu,\qssha)$:
$$ [1]\qssha [\{1\}^a,\vec{s}] = (a+1)[\{1\}^{a+1},\vec{s}] + [\{1\}^a, ([1]\qssha (\vec{s}-\vec{e}_1))+\vec{e}_1].$$
Then we have
$$
Li_{[\{1\}^{a+1},\vec{s}]}(e^\vep)= \frac{1}{a+1} Li_{[1] \qssha
[\{1\}^a,\vec{s}]}(e^\vep) - \frac{1}{a+1} Li_{[\{1\}^a, ([1]\qssha (\vec{s}-\vec{e}_1))+\vec{e}_1]}(e^\vep).$$ For the first
term on the right hand side, we have
$$Li_{[1] \qssha
[\{1\}^a,\vec{s}]}(e^\vep)=Li_{[1]}(e^\vep) Li_{[\{1\}^a,\vec{s}]}(e^\vep)
$$
and
$$Li_{[1]}(e^\vep)=-\ln(-\vep)+\sum _{i\ge 1} \zeta(-i+1)\frac {\vep ^i}{i!}.
$$
Hence it is of order $(\ln(-\vep))^{a+1}$ by the induction
hypothesis. The second term on the right hand side is a linear
combination $\sum_{i=1}^r Li_{[\{1\}^a,\vec{s}'_i]}(e^\vep)$
where each $\vec{s}'_i$ has its first component greater than 1. Thus
by the induction hypothesis, the second term is of order
$(\ln(-\vep))^a$ or lower. This completes the induction.
\end{proof}

\begin {lemma} Let $\ell\geq  0, k\geq 1, \vec s\in \ZZ ^k_{>0}$ with $s_1>1$ and $\vec r \in \ZZ ^k_{\ge 0}$.
\begin{enumerate}
\item The nested sum
$$Z(\wvec{\{1\}^\ell,s_1,\cdots,s_k}{\{1\}^\ell,0,\cdots,0};\vep)
:=\sum_{m_1>m_2>\cdots >m_\ell>n_1>\cdots>n_k\geq 1} \frac{e^{m_1\vep}\cdots e^{m_\ell\vep}}{m_1\cdots m_\ell n_1^{s_1}\cdots n_k^{s_k}} $$
is of order $(\lne )^\ell$.
\mlabel{it:zsum1}
\item
For $1\leq j\leq k$, the sum
$\sum\limits_{m_1>\cdots >m_\ell>n_1>\cdots>n_k\geq 1} \frac{e^{m_1\vep}\cdots e^{m_\ell\vep}n_j}{m_1\cdots m_\ell n_1^{s_1}\cdots n_k^{s_k}}$
is of order at most $(\lne)^{\ell+k}$.
\mlabel{it:zsum2}
\item
We have the asymptomatic formula $$Z(\wvec{\{1\}^\ell,s_1,\cdots,s_k}{\{1\}^\ell,0,\cdots,0};\vep)
=Z(\wvec{\{1\}^\ell,s_1,\cdots,s_k}{\{1\}^\ell,s_1,\cdots,s_k}) +o(\vep).
$$
Hence $Z(\wvec{\{1\}^\ell,s_1,\cdots,s_k}{\{1\}^\ell,s_1,\cdots,s_k})$ is of order $(\lne)^\ell$.
\mlabel{it:zsum3}
\end{enumerate}
\mlabel{lem:zsum}
\end{lemma}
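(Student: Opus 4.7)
The plan is to prove the three parts jointly by induction on $\ell$. The base case $\ell = 0$ is immediate, since $Z(\wvec{\vec s}{\vec 0};\vep) = \zeta(\vec s)$ is a finite constant (as $s_1 > 1$). The heavy lifting is part (1); part (2) is a variant of the same argument, and part (3) follows from (2) via a Taylor estimate.

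For (1), I would first split off the smallest $m$-index $M := m_\ell$ to obtain
$$
Z(\wvec{\{1\}^\ell,\vec s}{\{1\}^\ell,\vec 0};\vep) \;=\; \sum_{M \ge 1} \frac{e^{M\vep}}{M}\,S_{\ell-1}(M;\vep)\,\zeta_{<M}(\vec s),
$$
where $S_{\ell-1}(M;\vep) := \sum_{m_1 > \cdots > m_{\ell-1} > M} e^{(m_1+\cdots+m_{\ell-1})\vep}/(m_1\cdots m_{\ell-1})$ and $\zeta_{<M}(\vec s) := \sum_{M > n_1 > \cdots > n_k \ge 1} 1/\prod_i n_i^{s_i}$. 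A quasi-shuffle identity applied to the truncated one-variable sum $T(M;\vep) := \sum_{m > M} e^{m\vep}/m = -\lne - H_M + O(\vep)$ expresses $S_{\ell-1}(M;\vep) = T(M;\vep)^{\ell-1}/(\ell-1)! + (\text{bounded merge corrections involving } \sum_{m>M} e^{\alpha m\vep}/m^\alpha)$. Expanding $(-\lne - H_M)^{\ell-1}$ binomially and substituting, the main contributions take the form $(-\lne)^{\ell-1-j}$ times an inner sum $\sum_M e^{M\vep}H_M^j\zeta_{<M}(\vec s)/M$. The crucial step is then to recognize that this inner sum, after unfolding $H_M^j$ via a harmonic-number quasi-shuffle identity, is a $\mathbb{Z}$-linear combination of multiple polylogarithms of type $Li_{(\{1\}^{j+1},\vec s')}(e^\vep)$ (plus polylogs with fewer leading ones). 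By Lemma~\ref{lem:order1}, each such polylog has order at most $(\lne)^{j+1}$, and together with the prefactor $(-\lne)^{\ell-1-j}$, every term has order $\le (\lne)^\ell$, establishing (1).

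Part (2) proceeds identically, with $\zeta_{<M}(\vec s)$ replaced by the modified sum carrying the extra factor $n_j$; this effectively lowers $s_j$ by one and, in the worst case $s_j = 1$, can cascade to introduce further logarithms, yielding the generous bound $(\lne)^{\ell+k}$. Part (3) follows from the uniform estimate $|1-e^x|\le\min(|x|,2)$ with $x = \vep\sum_j s_j n_j$: the difference of the two $Z$-values is dominated by $|\vep|$ times the sum in (2), hence is $O(|\vep|(\lne)^{\ell+k}) = o(1)$ as $\vep \to 0^-$, and the order assertion on $Z(\wvec{\{1\}^\ell,\vec s}{\{1\}^\ell,\vec s};\vep)$ then follows from (1) by subtraction. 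The main obstacle is the combinatorial bookkeeping of the merge corrections and of the $H_M^j$-to-polylog unfolding in part (1); each piece is tame in isolation, but verifying that the full sum stays within order $(\lne)^\ell$ relies on a strict triangular (upper-depth-bounded) structure with Lemma~\ref{lem:order1} supplying the per-depth bound.
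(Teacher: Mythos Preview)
Your treatment of parts~(\ref{it:zsum2}) and~(\ref{it:zsum3}) matches the paper's: lowering $s_j$ by one (with the cascade when $s_j=1$ handled by summing out $n_j$ to produce a factor $n_{j-1}-n_{j+1}-1$ and reducing $k$), and for~(\ref{it:zsum3}) bounding $1-e^{(n_1s_1+\cdots+n_ks_k)\vep}$ by $(-\vep)\sum_j s_jn_j$ so that the difference is $(-\vep)$ times a finite combination of type-(\ref{it:zsum2}) sums. (Note that both arguments give $O(|\vep|(\lne)^{\ell+k})$, which is $o(1)$; neither yields $o(\vep)$, but $o(1)$ is all that is used downstream.)

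For part~(\ref{it:zsum1}) you take a genuinely different and more laborious route, and there is a real gap. The paper proceeds by a one-line quasi-shuffle recursion: since $Z$ is multiplicative for $\ast$, one has
\[
Z\!\left(\wvec{1}{1};\vep\right)\,Z\!\left(\wvec{\{1\}^\ell,\vec s}{\{1\}^\ell,\vec 0};\vep\right)
=(\ell+1)\,Z\!\left(\wvec{\{1\}^{\ell+1},\vec s}{\{1\}^{\ell+1},\vec 0};\vep\right)
+\sum_i Z\!\left(\wvec{\{1\}^{c_i},\vec s_i}{\{1\}^{c_i},\vec t_i};\vep\right)
\]
with $c_i\le\ell$ and the first entry of each $\vec s_i$ exceeding $1$; induction on $\ell$ (with Lemma~\ref{lem:order1} as base) finishes immediately. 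Your approach instead splits off $M=m_\ell$, expands $S_{\ell-1}(M;\vep)$ via a truncated quasi-shuffle, and then replaces the tail $T(M;\vep)=\sum_{m>M}e^{m\vep}/m$ by $-\lne-H_M+O(\vep)$. That last step is where the argument breaks: the remainder
\[
T(M;\vep)-\bigl(-\lne-H_M\bigr)=\bigl(-\ln(1-e^\vep)+\lne\bigr)-\sum_{m=1}^M\frac{e^{m\vep}-1}{m}
\]
is \emph{not} $O(\vep)$ uniformly in $M$; the second sum is of size $\min(M|\vep|,\,1+\log^+(M|\vep|))$, hence $O(1)$ on the dominant range $M\asymp 1/|\vep|$. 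Consequently, after raising to the $(\ell-1)$st power and summing against $e^{M\vep}\zeta_{<M}(\vec s)/M$, the discarded error is not $O(\vep)$ but only $O\bigl((\lne)^{\ell-1}\bigr)$. The conclusion can be salvaged---for instance by keeping $\sigma_M(\vep)=\sum_{m\le M}e^{m\vep}/m$ in place of $H_M$ throughout, so that the unfolding produces genuine regularized sums rather than bare harmonic numbers---but as written the uniformity claim is false and the bookkeeping you flag as ``the main obstacle'' has not been carried out. The paper's quasi-shuffle recursion sidesteps all of this.
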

\begin {proof}
For $\ell=0$, all parts of the lemma are obvious because of the convergency of $\zeta (\vec s)$.
For $\ell\geq 1$, we prove the parts separately.

\noindent
(\mref{it:zsum1}). We prove by induction on $\ell\geq 1$.
The case when $\ell=1$ follows from Lemma~\mref{lem:order1}.
Assume that the case when $\ell=r\geq 1$ has been proved.  Consider
$$Z\left(\wvec{1}{1}* \wvec{\{1\}^\ell, \vec s}{\{1\}^\ell, \vec
0};\vep\right)=Z\left(\wvec{1}{1};\vep\right)Z\left(\wvec{\{1\}^\ell, \vec s}{\{1\}^\ell, \vec 0};\vep\right).
$$
By the quasi-shuffle product, the left hand side is of the form
$$
(\ell+1)Z\left(\wvec{\{1\}^{\ell+1},\vec{s}}{\{1\}^{\ell+1},\vec{0}};\vep\right)
+ \sum_{i} Z\left(\wvec{\vec{\{1\}^{c_i},\vec{s}_i}}{\vec{\{1\}^{c_i},\vec{t}_j}};\vep\right)
$$
 with $c_i\leq \ell$ and $\vec{s}_i$ having its first component
greater than $1$. Thus by the induction hypothesis, all the terms
except the first one are of order at most $(\lne )^{\ell}$.
Similarly, the right hand side is of order at most
$(\lne)^{\ell+1}$. Thus
$Z(\wvec{\{1\}^{\ell+1},\vec{s}}{\{1\}^{\ell+1},\vec{0}};\vep) $ is
order at most $(\lne)^{\ell+1}$. This proves the first part of the
lemma.

\smallskip

(\mref{it:zsum2}).
We prove by induction on $k\geq 1$. When $k=1$, we have $j=1$. Then by Item~(\mref{it:zsum1}), the sum
$$\sum_{m_1>\cdots>m_\ell>n_1\geq 1}\frac{e^{m_1\vep}\cdots e^{m_\ell\vep}n_1}{m_1\cdots m_\ell n_1^{s_1}}
=\sum_{m_1>\cdots>m_\ell>n_1\geq 1}\frac{e^{m_1\vep}\cdots e^{m_\ell\vep}}{m_1\cdots m_\ell n_1^{s_1-1}}$$
is of order at most $(\lne)^{\ell+1}$.

Assume the case of $k=r$ and consider
\begin{equation}
\sum_{m_1>\cdots>m_\ell>n_1>\cdots>n_{r+1}\geq 1} \frac{e^{m_1\vep}\cdots e^{m_\ell}n_j}{m_1 \cdots m_\ell
n_1^{s_1}\cdots n_k^{s_{r+1}}}=\sum_{m_1>\cdots >m_\ell>n_1>\cdots>n_{r+1}\geq 1} \frac{e^{m_1\vep}\cdots e^{m_\ell\vep}}{m_1\cdots m_\ell
n_1^{s_1}\cdots n_j^{s_j-1} \cdots n_k^{s_{r+1}}}, 1\leq j\leq r+1.
\mlabel{eq:order2}
\end{equation}
If $j=1$, then by Item~(\mref{it:zsum1}) again, the sum in Eq.~(\mref{eq:order2}) is of order at most $(\lne )^{r+k+1}$.
For $j>1$, we consider two cases. If $s_j>1$, then by Item~(\mref{it:zsum2}) again, the sum in Eq.~(\mref{eq:order2}) is of order $(\lne)^\ell$. If $s_j=1$, then $n_j$ does not appear in the summand, but still appears in the index set of the sum in Eq.~(\mref{eq:order2}).
Thus the sum is simplified to
$$
\sum_{m_1>\cdots>m_\ell>n_1>\cdots n_{j-1}>n_{j+1}\cdots >n_k\geq 1}
\frac{e^{m_1\vep}\cdots e^{m_\ell\vep}(n_{j-1}-n_{j+1}-1)}{m_1 \cdots m_\ell n_1^{s_1}\cdots n_{j-1}^{s_{j-1}}n_{j+1}^{s_{j+1}}
\cdots n_k^{s_{r+1}}}.
$$
Then by the induction hypothesis, the sum is of order at most $(\lne)^{r+k}$. This completes the induction.

\smallskip

\noindent
(\mref{it:zsum3}).
Note that, for any real number $x$,
$$e^x>1+x.$$
Thus in our case,
\begin{equation}
(n_1r_1+\cdots n_kr_k)(-\vep )>1-e^{(n_1r_1+\cdots n_kr_k)\vep }.
\mlabel{eq:ineq}
\end{equation}
Therefore
{\allowdisplaybreaks
\begin{eqnarray*}
\lefteqn{
Z\left(\wvec{\{1\}^\ell,s_1,\cdots,s_k}{\{1\}^\ell,0,\cdots,0};\vep\right)
-\sum_{m_1>\cdots
>m_\ell>n_1>\cdots>n_k\geq 1} \frac{e^{m_1\vep}\cdots
e^{m_\ell\vep}e^{n_1r_1\vep} \cdots e^{n_kr_k\vep}}{m_1\cdots m_\ell
n_1^{s_1}\cdots n_k^{s_k}}
}\\
&=&
\sum_{m_1>\cdots>m_\ell>n_1>\cdots>n_k\geq 1}
\frac{e^{m_1\vep}\cdots e^{m_\ell \vep}(1-e^{(n_1r_1+\cdots n_kr_k)\vep })}{m_1\cdots m_\ell
n_1^{s_1}\cdots n_k^{s_k}}\\
&<& \sum_{m_1>\cdots>m_\ell>n_1>\cdots>n_k\geq 1}
\frac{e^{m_1\vep}\cdots e^{m_\ell\vep}(n_1r_1+\cdots n_kr_k)(-\vep )}{m_1\cdots m_\ell n_1^{s_1}\cdots
n_k^{s_k}}\\
&=&
\sum_{i=1}^k r_i (-\vep) \left( \sum_{m_1>\cdots>m_\ell>n_1>\cdots>n_k\geq 1}
\frac{e^{m_1\vep}\cdots e^{m_\ell\vep}n_j}{m_1\cdots m_\ell n_1^{s_1}\cdots
n_k^{s_k}}\right).
\end{eqnarray*}
}
Thus Item~(\mref{it:zsum3}) follows from Item~(\mref{it:zsum2}).
\end {proof}

\begin {lemma} Let $\ell, k\geq 1$ and $\vec{s}=(s_1,\cdots,s_k)\in \ZZ_{\geq 1}^{k}$ with $s_1>1$ be given.
\begin{enumerate}
\item
There are $a_{i,j_i}\in \ZZ$, $\vec s_{i,j_i}\in \ZZ ^{k_{i,j_i}}$ with the first
component of $\vec{s}_{i,j_i}$ greater than $1$, where $k_{i,j_i}\geq 1,1\leq j_i\leq m_i, m_i\geq 1, 1\leq i\leq \ell$,  such that
\begin{equation}
[\{1\}^\ell, \vec s]=\sum_{1\leq j_i\leq m_i,0\leq i\leq\ell} a_{i,j_i}[\{1\}^i]*[\vec s_{i,j_i}].
\mlabel{eq:inda1}
\end{equation}
\mlabel{it:1inda}
\item
Let $a_{ij}\in \ZZ$ and $\vec s_{ij}\in \ZZ ^{k_{ij}}$ with $k_{ij}\geq 1$ be as given in Item~(\mref{it:1inda}).
Then for any $\vec{p}=(p_1,\cdots,p_\ell)\in \ZZ_{\geq 0}^\ell$ and $\vec{r}\in \ZZ_{\geq 0}^k$, there are $\vec{r}_{i,j_i}\in \ZZ_{\geq 0}^{k_{i,j_i}}, 1\leq j_i\leq m_i, m_i\geq 1, 1\leq i\leq \ell$, such that
\begin{equation}
\wvec{\{1\}^\ell, \vec s}{\vec{p},\vec{r}}=\sum_{1\leq j_i\leq m_i, 0\leq i\leq\ell} a_{i,j_i} \wvec{\{1\}^i}{p_1,\cdots,p_i}*\wvec{\vec s_{i,j_i}}{r_{i,j_i}}.
\mlabel{eq:indb1}
\end{equation}
\mlabel{it:1indb}
\end{enumerate}
\mlabel{lem:1ind}
\end{lemma}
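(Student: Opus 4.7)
The plan is to prove both parts simultaneously by induction on $\ell\geq 1$, built around a ``leading $1$'s'' analysis of the quasi-shuffle $[\{1\}^\ell]\ast[\vec s]$ and of its $\frakM$-lift $\wvec{\{1\}^\ell}{\vec p}\ast\wvec{\vec s}{\vec r}$.

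For Part~(\mref{it:1inda}), the recursive definition of $\ast$ in Eq.~(\mref{eq:quasi}) shows that the first letter of any word arising in $[\{1\}^a]\ast[t_1,\dots,t_b]$ is one of $1$, $t_1$, or $1+t_1$. Iterating with the hypothesis $s_1>1$, every word appearing in $[\{1\}^\ell]\ast[\vec s]$ begins with a block $\{1\}^{\ell_0}$ for some $0\leq\ell_0\leq\ell$; if $\ell_0<\ell$, the next letter is either $s_1$ or $s_1+1$, both $>1$; and the maximal case $\ell_0=\ell$ is attained only by the unique shuffle in which all $\ell$ copies of $1$ are peeled off from the left factor before touching $[\vec s]$, contributing $[\{1\}^\ell,\vec s]$ with coefficient $1$. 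Thus
$$
[\{1\}^\ell]\ast[\vec s]\;=\;[\{1\}^\ell,\vec s]\;+\;\sum_j c_j\,[\{1\}^{\ell_j},\vec s'_j],
$$
with $\ell_j<\ell$ and each $\vec s'_j$ starting with a letter $>1$. Solving for $[\{1\}^\ell,\vec s]$ and applying the inductive hypothesis to every term in the sum---together with the $i=\ell$ contribution $a_{\ell,1}=1$, $\vec s_{\ell,1}=\vec s$---yields Eq.~(\mref{eq:inda1}).

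For Part~(\mref{it:1indb}) I would run the identical induction inside the quasi-shuffle algebra $\calh_\frakM\qsh$. The assignment $\pi:\frakM\to G_{\geq 0}$, $\wvec{s}{r}\mapsto[s]$, is a semigroup homomorphism and so induces a quasi-shuffle algebra homomorphism $\calh_\frakM\qsh\to\calh_{\geq 0}\qsh$ under which the $\frakM$-expansion of $\wvec{\{1\}^\ell}{\vec p}\ast\wvec{\vec s}{\vec r}$ maps to the $G_{\geq 0}$-expansion from Part~(\mref{it:1inda}). Hence the coefficients $c_j$ and the top-level words $\vec s'_j$ are forced to coincide with those of Part~(\mref{it:1inda}). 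Because the quasi-shuffle preserves the left-to-right order within each factor, the sub-labels on the leading $\{1\}^{\ell_j}$ block in every lower-order term are necessarily $p_1,\dots,p_{\ell_j}$, the first $\ell_j$ entries of $\vec p$. Recursive application then produces Eq.~(\mref{eq:indb1}) with the same $a_{i,j_i}$ and $\vec s_{i,j_i}$ as in Part~(\mref{it:1inda}), with $\vec r_{i,j_i}$ dictated by the shuffle pattern.

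The main technical obstacle is the leading-$1$'s analysis itself: verifying cleanly that $[\{1\}^\ell,\vec s]$ has coefficient exactly $1$ in $[\{1\}^\ell]\ast[\vec s]$, and that every other word in this expansion begins with at most $\ell-1$ consecutive $1$'s followed by a letter $>1$. Once this is established, the projection $\pi$ automatically synchronizes Parts~(\mref{it:1inda}) and~(\mref{it:1indb}), and the remainder of the induction reduces to bookkeeping of the shuffle labels.
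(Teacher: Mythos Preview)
Your proposal is correct and follows essentially the same induction on $\ell$ as the paper: expand $[\{1\}^\ell]\ast[\vec s]$, isolate the unique term $[\{1\}^\ell,\vec s]$, and apply the inductive hypothesis to the remainder, then run the parallel computation in $\calh_\frakM\qsh$ with matching coefficients. Your use of the projection $\pi:\calh_\frakM\qsh\to\calh_{\geq 0}\qsh$ to synchronize Parts~(\mref{it:1inda}) and~(\mref{it:1indb}) is a slightly cleaner packaging of what the paper states directly (``the quasi-shuffle product in $\calh_\frakM$ has the same effect on the first row and the second row''), but the argument is the same.
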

\begin{proof} We prove this lemma by induction. For $\ell =1$,
by the quasi-shuffle relation, we have
$$[1]*[\vec s]=[1,\vec s]+\sum_{1\leq j_1 \leq m_1} c_{1,j_1}[\vec s_{1,j_1}],
$$
where $c_{1,j_1}\in \{0,1\}$ and $\vec{s}_{1,j_1}\in \ZZ_{\geq 1}^{k_{1,j_1}}$ have the first component greater than $1$.
So
$$[1,\vec s]=[1]*[\vec s]-\sum_{1\leq j_1 \leq m_1}c_{1,j_1}[\vec s_{1,j_1}],
$$
giving us the coefficients $a_{0,1}=1, a_{1,i_1}=-c_{1,i_1}$ and proving Item~(\mref{it:1inda}) when $\ell=1$.

Further for any $\vec{p}=(p_1)\in \ZZ_{\geq 0}^1$ and $\vec{r}\in \ZZ_{\geq 0}^k$, note that the quasi-shuffle product in $\calh_\frakM$ has the same effect on the first row and the second row of the basis elements $\wvec{\vec{s}}{\vec{r}}$. Thus we have
$$\wvec{1}{p_1}*\wvec{\vec s}{\vec{r}}=\wvec{1,\vec s}{p_1,\vec{r}}+\sum_{1\leq j_1 \leq m_1}c_{1,j_1}\wvec{\vec s_{1,j_1}}{\vec{r}_{1,j_1}},
$$
for the same $c_{1,j_1}$ in the last part of the proof. Here $\vec{r}_{1,j_1}\in \ZZ_{\geq 0}^{k_{1,j_1}}$. Thus for the same coefficients $a_{i,j_i}, 0\leq i\leq 1,$ in the last part of the proof, we have
$$
\wvec{1,\vec s}{p_1,\vec{r}}= a_{0,1}\wvec{1}{p_1}*\wvec{\vec s}{\vec{r}}+\sum_{1\leq j_1 \leq m_1}a_{1,j_1}\wvec{\vec s_{1,j_1}}{\vec{r}_{1,j_1}},
$$
proving Item~(\mref{it:1indb}) when $\ell=1$.

Now assume that the lemma is proved for $1, \cdots, \ell$.  Then by the quasi-shuffle product, we have
$$[\{1\}^{\ell +1}]*[\vec s]=[\{1\}^{\ell +1},\vec s]+\sum _{1\leq j_i\leq m_i, 0\leq i\leq \ell }c_{i,j_i}[\{1\}^i,\vec s_{i,j_i}],
$$
where $c_{i,j_i}\in \{0,1\}$ and $s_{i,j_i}\in \ZZ_{\geq 0}^{k_{i,j_i}}$.
Further for any $\vec{p}\in \ZZ_{\geq 0}^{\ell+1}$ and $\vec{r}\in \ZZ_{\geq 0}^k$, we also have
$$\wvec{\{1\}^{\ell +1}}{\ \vec{p}\ }*\wvec{\vec s}{\vec{r}}=\wvec{\{1\}^{\ell +1},\vec s}{\ \vec{p},\ \vec{r}}+\sum _{1\leq j_i\leq m_i, 0\leq i\leq \ell }c_{i,j_i}\wvec{\quad \{1\}^i,\quad \vec s_{i,j_i}}{p_1,\cdots,p_i,\vec{r}_{i,j_i}}
$$
for the same $c_{i,j_i}$ and some $\vec{r}_{i,j_i}\in\ZZ_{\geq 0}^{k_{i,j_i}}$.
Thus
\begin{equation}
[\{1\}^{\ell +1},\vec s]=[\{1\}^{\ell +1}]*[\vec s]-\sum _{0\leq i\leq \ell, j\geq 0}c_{i,j_i} [\{1\}^i,\vec s_{i,j_i}]
\mlabel{eq:inda2}
\end{equation}
and
\begin{equation}
\wvec{\{1\}^{\ell +1},\vec s}{\vec{p},\vec{r}}=\wvec{\{1\}^{\ell +1}}{\vec{p}}*\wvec{\vec s}{\vec{r}}-\sum _{1\leq j_i\leq m_i, 0\leq i\leq \ell }c_{i,j_i}\wvec{\quad \{1\}^i,\quad \vec s_{i,j_i}}{p_1,\cdots,p_i,\vec{r}_{i,j_i}}.
\mlabel{eq:indb2}
\end{equation}
By the induction hypothesis, the lemma applies the terms in the sums of the last two equations and gives expressions in Eqs.~(\mref{eq:inda1}) and (\mref{eq:indb1}). In particular, for each term in the sum in Eq.~(\mref{eq:inda2}), the coefficients $a_{i,j_i}$ in Eq.~(\mref{eq:inda1}) are the same as the coefficients in Eq.~(\mref{eq:indb1}) for the corresponding term in the sum in Eq.~(\mref{eq:indb2}).
Thus the lemma is proved for $\ell+1$, completing the induction.
\end{proof}

By Lemma~\mref{lem:1ind}, we have, for $\ell\geq 1$,
$$\wvec{\{1\}^\ell,\vec s}{\vec{e}_1,
\vec 0}=\sum_{1\leq j_i\leq m_i, 0\leq i\leq\ell} a_{i,j_i} \wvec{\{1\}^i}{\vec{e}_1}*\wvec{\vec
s_{i,j_i}}{\vec r_{i,j_i}}
$$
where $\vec{e}_1$ on the left (resp. right) hand side is the first unit vector of dimension $\ell$ (resp. $i$), and
$$\wvec{\{1\}^\ell,\vec s}{\{1\}^\ell,
\vec s}=\sum_{1\leq j_i\leq m_i, 0\leq i\leq\ell} a_{i,j_i} \wvec{\{1\}^i}{\{1\}^i}*\wvec{\vec
s_{i,j_i}}{\vec r\,'_{i,j_i}}
$$
for the $a_{i,j_i}\in \ZZ$ in Lemma~\mref{lem:1ind} and some
$\vec r_{i,j_i}, \vec r\,'_{i,j_i}\in
\ZZ ^{k_{i,j_i}}_{\ge 0}$.
Therefore, we have
\begin {equation}
 Z(\wvec{\{1\}^\ell,\vec s}{\vec{e}_1,
\vec 0};\vep)=\hspace{-.4cm}\sum_{1\leq j_i\leq m_i, 0\leq i\leq\ell} \hspace{-.4cm}a_{i,j_i} Z(\wvec{\{1\}^i}{\vec{e}_1}*\wvec{\vec s_{i,j_i}}{\vec r_{i,j_i}};\vep)
=\hspace{-.4cm}\sum_{1\leq j_i\leq m_i, 0\leq i\leq\ell} \hspace{-.4cm} a_{ij} Z(\wvec{\{1\}^i}{\vec{e}_1};\vep)Z(\wvec{\vec s_{i,j_i}}{\vec r_{i,j_i}};\vep) \mlabel
{eq:1expstu}
\end {equation}
and
\begin {equation}
 Z(\wvec{\{1\}^\ell,\vec s}{\{1\}^\ell,
\vec s};\vep)=\hspace{-.4cm}\sum_{1\leq j_i\leq m_i, 0\leq i\leq\ell}\hspace{-.4cm} a_{i,j_i} Z(\wvec{\{1\}^i}{\{1\}^i}*\wvec{\vec s_{i,j_i}}{\vec r\,'_{i,j_i}};\vep) =\hspace{-.4cm}\sum_{1\leq j_i\leq m_i, 0\leq i\leq\ell} \hspace{-.4cm} a_{i,j_i} Z(\wvec{\{1\}^i}{\{1\}^i}) Z(\wvec{\vec s_{i,j_i}}{\vec r\,'_{i,j_i}};\vep). \mlabel{eq:1expshu}
\end{equation}

By Theorem~\mref{thm:polyphi}.(\mref{it:polyphi}), taking $\vep \to 0$ in Eq.~(\mref {eq:1expstu}) gives
\begin{equation}
Z^{RB}(x_1^\ell\frakx_{\vec s})=\sum_{1\leq j_i\leq m_i, 0\leq i\leq\ell} a_{i,j_i}Z^{RB}(x_1^i)\zeta (\vec s_{i,j_i}).
\mlabel{eq:1shf}
\end{equation}
On the other hand, by Lemma~\mref{lem:zsum}, the Laurent series expansions of the regularized MZVs $Z(\wvec{\vec{u}}{\vec{u}};\vep)$ in Eq.~(\mref{eq:1expshu}) are in $\CC[T]\{\{\vep\}\}$. Thus the corresponding renormalized values $\zeta(\wvec{\vec{u}}{\vec{u}})$ defined in~\cite[Definition 3.5]{GZ} are obtained by taking $\vep=0$ in $Z(\wvec{\vec{u}}{\vec{u}};\vep)$.
Thus we have
\begin{equation}
\zeta(\wvec{\{1\}^\ell,\vec s}{\{1\}^\ell,\vec s})=\sum_{1\leq j_i\leq m_i, 0\leq i\leq\ell} a_{i,j_i}\zeta(\wvec{\{1\}^i}{\{1\}^i})\zeta (\vec s_{i,j_i}).
\mlabel{eq:1qsh}
\end{equation}
Note that $x_1^\ell=x_1^{\ssha \ell}/\ell!$ in $\calh\shf\lone\nonu$. Thus with the assignment
\begin{equation}
\beta (\frac {T^\ell}{\ell!})=\beta (Z^{RB}(x_1^\ell):=\zeta (\wvec{\{1\}^\ell}{\{1\}^\ell}), \quad \ell\geq 1,
\mlabel{eq:beta}
\end{equation}
and $\CC$-linearity, from Eqs.~(\mref{eq:1shf}) and (\mref{eq:1qsh}) we have
\begin{equation}
\beta (Z^{RB}(x_1^\ell \frakx_{\vec s}))=\zeta(\wvec{\{1\}^\ell,\vec s}{\{1\}^\ell,\vec s}),
\mlabel{eq:beta2}
\end{equation}
giving a linear map
$$ \beta: \CC[T] \to \CC[T].$$

\begin {theorem}
We have $$\beta =\rho ^{-1}$$ for the $\rho$ in Eq.~(\mref{eq:rho})
from~\mcite{IKZ}. \mlabel{thm:ed2}
\end{theorem}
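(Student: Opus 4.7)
The plan is to verify that $\beta$ and $\rho^{-1}$ agree on the $\CC$-basis $\{T^\ell/\ell!\}_{\ell\ge 0}$ of $\CC[T]$. By Eq.~(\mref{eq:rho}), $\rho$ sends the generating series $\sum_\ell u^\ell T^\ell/\ell! = e^{Tu}$ to $A(u)\,e^{Tu}$, so $\rho^{-1}(T^\ell/\ell!)$ is the coefficient of $u^\ell$ in $A(u)^{-1} e^{Tu}$. By Eq.~(\mref{eq:beta}), $\beta(T^\ell/\ell!) = \zeta(\wvec{\{1\}^\ell}{\{1\}^\ell})$. Thus the theorem reduces to the single generating function identity
\begin{equation*}
\sum_{\ell \ge 0} \zeta\!\left(\wvec{\{1\}^\ell}{\{1\}^\ell}\right) u^\ell \;=\; A(u)^{-1}\, e^{Tu}.
\end{equation*}

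To establish this, I would first compute the $\vep$-regularized generating function. From Eq.~(\mref{eq:reggmzv}),
\begin{equation*}
\sum_{\ell\ge 0} Z\!\left(\wvec{\{1\}^\ell}{\{1\}^\ell};\vep\right) u^\ell \;=\; \prod_{m\ge 1}\!\left(1 + \frac{u\,e^{m\vep}}{m}\right),
\end{equation*}
since expanding the product and grouping terms of a given size reproduces $Z(\wvec{\{1\}^\ell}{\{1\}^\ell};\vep) = \sum_{n_1>\cdots>n_\ell\ge 1} \prod_{i=1}^\ell e^{n_i\vep}/n_i$. Taking logarithms gives
\begin{equation*}
\sum_{m\ge 1}\log\!\left(1 + \frac{u\,e^{m\vep}}{m}\right) \;=\; \sum_{k\ge 1}\frac{(-1)^{k-1}}{k}\,u^k\, Li_k(e^{k\vep}).
\end{equation*}
For $k\ge 2$, $Li_k(e^{k\vep})$ is analytic at $\vep=0$ with value $\zeta(k)$, contributing $\sum_{k\ge 2}\frac{(-1)^{k-1}}{k}\zeta(k)\,u^k = -\log A(u)$ by Eq.~(\mref{eq:A}). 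The $k=1$ contribution is $u\cdot Li_1(e^\vep) = -u\log(1-e^\vep) = u(-\lne)+O(\vep)$, using $1-e^\vep = -\vep(1+O(\vep))$, which becomes $Tu$ upon identifying $-\lne$ with $T$ as in Eq.~(\mref{eq:rbmap}). Exponentiating yields $A(u)^{-1}e^{Tu}$ as the $\vep=0$ value of the full generating function.

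To conclude, I would invoke Lemma~\mref{lem:zsum}~(\mref{it:zsum3}): each $Z(\wvec{\{1\}^\ell}{\{1\}^\ell};\vep)$ has a polynomial-in-$T$ Laurent expansion with no pole at $\vep=0$, and its value at $\vep=0$ is the renormalized MZV $\zeta(\wvec{\{1\}^\ell}{\{1\}^\ell})$ of~\mcite{GZ}. Matching coefficients of $u^\ell$ on the two sides of the identity above then gives $\beta(T^\ell/\ell!) = \rho^{-1}(T^\ell/\ell!)$, and $\beta = \rho^{-1}$ follows by $\CC$-linearity. The main technical point is the coefficient-wise interchange of the $\vep\to 0$ limit with the infinite product and with the logarithm: at each fixed power $u^\ell$ the logarithmic expansion involves only finitely many $Li_k(e^{k\vep})$, so combined with the order estimates in Lemmas~\mref{lem:order1} and~\mref{lem:zsum} this interchange is controlled and the verification reduces to the generating-function calculation above.
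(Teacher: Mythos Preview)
Your argument is correct and gives a genuinely different proof from the paper's. The paper proceeds by quoting external results: it identifies $Z^{RB}$ with $Z^{\ssha}$ via Theorem~\mref{thm:rbmzv}, identifies the renormalized values $\zeta(\wvec{\vec u}{\vec u})$ with $Z^*(\vec u)$ via results of~\mcite{GZ}, and then invokes Theorem~1 of~\mcite{IKZ} (the commutative diagram~(\mref{eq:diag2})) to conclude that $\beta$ and $\rho^{-1}$ agree. Your approach instead bypasses both~\mcite{IKZ} and the identification with $Z^*$ entirely: you compute the generating series $\sum_{\ell\ge 0} Z(\wvec{\{1\}^\ell}{\{1\}^\ell};\vep)\,u^\ell$ directly as the infinite product $\prod_{m\ge 1}(1+u\,e^{m\vep}/m)$, take logarithms, and read off the limit $A(u)^{-1}e^{Tu}$. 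In effect you establish Corollary~\mref{co:zetaone} first, by an elementary self-contained computation, and then deduce Theorem~\mref{thm:ed2} from it, whereas the paper derives the corollary from the theorem. Your route is more hands-on and avoids the machinery of~\mcite{IKZ}; the paper's route is shorter but relies on that machinery.

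One minor point: your appeal to Lemma~\mref{lem:zsum}~(\mref{it:zsum3}) is not quite on target, since that lemma is stated for $\wvec{\{1\}^\ell,\vec s}{\{1\}^\ell,\vec s}$ with $k\ge 1$ and $s_1>1$, not for the bare $\wvec{\{1\}^\ell}{\{1\}^\ell}$. However, this is harmless, because your own product computation already shows that each coefficient $Z(\wvec{\{1\}^\ell}{\{1\}^\ell};\vep)$ is a polynomial in $-\lne$ with coefficients analytic at $\vep=0$ (it is a polynomial in $Li_1(e^\vep),\ldots,Li_\ell(e^{\ell\vep})$), so the evaluation at $\vep=0$ is well defined and, by the absence of poles, coincides with the renormalized value of~\mcite{GZ}.
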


\begin {proof}
By Theorem~\mref{thm:rbmzv} we have $Z^{RB}=Z^{\ssha}$.
By Theorem 4.5 and Proposition 4.7 in~\mcite{GZ}, we have
$\zeta(\wvec{\vec{u}}{\vec{u}})=Z^*(\vec{u})$ for $\vec{u}\in \ZZ^k_{\geq 1}$. Thus by Eq.~(\mref{eq:beta2}) and Theorem 1 in \mcite{IKZ}, $\beta$ agrees with $\rho^{-1}$.
\end {proof}

We end our discussion with an application of Theorem~\mref{thm:ed2}.
{}From the property of $\rho$ in~(\mref{eq:rho}):
$$\rho (e^{Tu})=A(u)e^{Tu}
$$
and Theorem~\mref{thm:ed2}, we have
$$\frac 1{A(u)}e^{Tu}=\beta (e^{Tu}).
$$
But by the definition of $A(u)$ in Eq.~(\mref{eq:A}) and the identification of $T$ with $Z^*(1)$, we have
$$\frac 1{A(u)}e^{Tu}=\exp(\sum _{n=1}^\infty (-1)^{n-1}Z ^*(n)\frac {u^n}n).
$$
By Eq.~(\mref{eq:beta}), we have
$$\beta (e^{Tu})=1+\sum_{n=1}^\infty Z^*(\{1\}^n)u^n.
$$
Therefore we have
\begin{coro}
$$\exp(\sum _{n=1}^\infty (-1)^{n-1}Z^*(n)\frac {u^n}n)=1+\sum_{n=1}^\infty Z^*(\{1\}^n)u^n.
$$
\mlabel{co:zetaone}
\end{coro}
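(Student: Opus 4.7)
The plan is to read the corollary as a direct consequence of Theorem~\ref{thm:ed2} by comparing two generating-function expressions for $\beta(e^{Tu})$ in $\CC[T][[u]]$. Since the main content is already contained in Theorem~\ref{thm:ed2}, the proof will be a short generating-function manipulation rather than new combinatorial work.

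First, I would apply $\beta=\rho^{-1}$ to the defining identity $\rho(e^{Tu})=A(u)e^{Tu}$ from Eq.~(\ref{eq:rho}). Because $A(u)\in\RR[[u]]$ is a scalar in the $T$-variable, $\beta$ is $\RR[[u]]$-linear on $\CC[T][[u]]$, so applying $\beta$ to both sides gives
$$e^{Tu} \;=\; A(u)\,\beta(e^{Tu}),$$
and therefore $\beta(e^{Tu}) = A(u)^{-1}e^{Tu}$.

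Next, I would evaluate the two sides of this identity independently. On the $\beta$-side, I would expand $e^{Tu}=\sum_{\ell\ge0}(T^\ell/\ell!)u^\ell$, apply $\CC$-linearity of $\beta$, and use the defining relation Eq.~(\ref{eq:beta}) together with $\beta(1)=1$ to obtain
$$\beta(e^{Tu}) \;=\; 1+\sum_{n\ge1} Z^*(\{1\}^n)\,u^n.$$
On the $A(u)^{-1}$-side, I would use the definition of $A(u)$ in Eq.~(\ref{eq:A}) to write
$$A(u)^{-1}=\exp\!\Bigl(\sum_{n\ge2}\frac{(-1)^{n-1}}{n}\zeta(n)\,u^n\Bigr),$$
then combine with $e^{Tu}=\exp(Z^*(1)\,u)$ under the identification $T=Z^*(1)$, noting that $Z^*(n)=\zeta(n)$ for $n\ge2$ since those are convergent MZVs. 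This merges the $n=1$ contribution from $e^{Tu}$ into the exponential, yielding
$$A(u)^{-1}e^{Tu}=\exp\!\Bigl(\sum_{n\ge1}\frac{(-1)^{n-1}}{n}Z^*(n)\,u^n\Bigr).$$

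Equating the two expressions for $\beta(e^{Tu})$ then gives exactly the stated identity. I do not expect any real obstacle here: every nontrivial step (the extended double shuffle comparison, the values $\beta(T^\ell/\ell!)=Z^*(\{1\}^\ell)$, and the identification $T=Z^*(1)$) is already in place, and the only care required is the bookkeeping that absorbs the $n=1$ term from $e^{Tu}=e^{Z^*(1)u}$ into the exponent so that the sum in the final formula starts at $n=1$ rather than $n=2$.
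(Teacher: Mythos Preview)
Your proposal is correct and follows essentially the same approach as the paper: apply $\beta=\rho^{-1}$ to the identity $\rho(e^{Tu})=A(u)e^{Tu}$ to obtain $\beta(e^{Tu})=A(u)^{-1}e^{Tu}$, then evaluate each side separately via Eq.~(\ref{eq:beta}) (together with $\zeta\bigl(\wvec{\{1\}^\ell}{\{1\}^\ell}\bigr)=Z^*(\{1\}^\ell)$) and via the definition of $A(u)$ with the identification $T=Z^*(1)$. Your write-up is in fact slightly more explicit than the paper's about the bookkeeping that merges the $n=1$ term into the exponent.
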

This is an extension of the well-known formula~\mcite{IKZ}
$$
\exp(\sum _{n=1}^\infty (-1)^{n-1}\zeta ^*(nk)\frac {u^n}n)=1+\sum_{n=1}^\infty \zeta
^*(\{k\}^n)u^n, \quad k\geq 2.
$$
and can also be derived from~\cite[(5.8)]{IKZ}.

%=======================================================================================
%========================================================================================
%========================================================================================
%\addcontentsline{toc}{section}{\numberline {}References}

\end{document}